\theoremstyle{plain}
\newtheorem{theorem}{Theorem}[section]
\newtheorem{lemma}[theorem]{Lemma}
\newtheorem{corollary}[theorem]{Corollary}
\theoremstyle{definition}
\newtheorem{definition}[theorem]{Definition}
 \newtheorem{subsec}[theorem]{}
\newcommand{\ustar}{\hbox{\ $\cup$\hskip -5.5 pt\raise 1 pt\hbox{$*$}}}
\newcommand{\ntos}{{}^{\hbox{$\ben$}}\!S}
\newcommand{\domain}{\hbox{\rm domain}}
\newcommand{\emp}{\emptyset}
\newcommand{\ben}{\mathbb N}
\newcommand{\beq}{\mathbb Q}
\newcommand{\seqx}{\langle x_n\rangle_{n=1}^\infty}
\newcommand{\vvarphi}{\raise 2 pt \hbox{$\varphi$}}
\newcommand{\nhat}[1]{\{1,2,\ldots,#1\}}
\begin{document}

\title{Combinatorially rich sets in partial semigroups}
\author{Arpita Ghosh}
\address{Department of Mathematics,
          University of Haifa,
          3498838 Haifa, Israel.}
\email{arpi.arpi16@gmail.com}
\author{Neil Hindman}
\address{Department of Mathematics,
                 Howard University,
                  Washington, DC 20059, USA}
\email{nhindman@aol.com}

\subjclass{05D10, 22A30, 54D35}
\keywords{combinatorially rich sets, partial semigroups}

\begin{abstract} There are several notions of size for semigroups
that have natural analogues for partial semigroups. Among these are
{\it thick\/}, {\it syndetic\/}, {\it central\/}, {\it piecewise syndetic\/},
{\it IP\/}, {\it J\/}, and the more recently 
introduced notion of {\it combinatorially rich\/}, abbreviated CR.  
We investigate the notion of CR set for adequate partial semigroups,
its relation to other notions, especially J sets, and some surprising 
differences among them.
\end{abstract}
\maketitle

\section{Notions of size}

If $(S,\cdot,{\mathcal U})$ is a compact Hausdorff right topological semigroup
(meaning that $(S,{\mathcal U})$ is a compact Hausdorff topological space, $(S,\cdot)$ is
a semigoup), and for each $x\in S$, the function $\rho_x:S\to S$ defined by 
$\rho_x(y)=y\cdot x$ is continuous), then $S$ has idempotents and a smallest 
two sided ideal. The smallest ideal is denoted by $K(S)$. If $(S,\cdot)$ is a discrete semigroup and
$(\beta S,{\mathcal U})$ is its Stone-\v Cech compactification, there is a unique extension
of the operation to $\beta S$ so that $(\beta S,\cdot,{\mathcal U})$
is a compact right topological semigroup with $S$ contained in its
topological center (meaning that for each $x\in S$ the function
$\lambda_x:\beta S\to\beta S$ defined by $\lambda_x(y)=x\cdot y$ is continuous).
We take the points of $\beta S$ to be ultrafilters on $S$, identifying 
the principal ultrafilters with the points of $S$. For an elementary
introduction to the algebra of $\beta S$ see \cite[Part I]{HS}.

All of the notions of size mentioned in the abstract except J and CR have simple
algebraic descriptions in semigroups which we take here as the definitions.

\begin{definition}\label{defalg} Let $(S,\cdot)$ be a discrete semigroup and
let $A\subseteq	S$. \begin{itemize}
\item[(1)] The set $A$ is a {\it thick \/} set if and only if there is a left ideal
$L$ of $\beta S$ such that $L\subseteq \overline A$.
\item[(2)] The set $A$ is a {\it syndetic \/} set if and only if for every left ideal
$L$ of $\beta S$, $\overline A\cap L\neq\emp$.
\item[(3)] The set $A$ is a {\it central \/} set if and only if there is an
idempotent\\
 $p\in K(\beta S)\cap\overline A$.
\item[(4)] The set $A$ is a {\it piecewise syndetic \/} (PS) set if and only if 
$\overline A\cap K(\beta S)\neq\emp$.
\item[(5)] The set $A$ is an {\it IP \/} set if and only if there is an
idempotent $p\in\overline A$.
\end{itemize}\end{definition}

All of the notions listed in Definition \ref{defalg} have elementary
characterizations. But the characterization for central is very 
complicated, so we will only refer to it as \cite[Theorem 14.25]{HS}.
We list the others now.

Given a set $X$, we let $\mathcal{P}_f(X)=\{H:H$ is a finite nonempty subset of $X\}.$
Given a sequence $\langle x_n\rangle_{n=1}^\infty$ in $S$, we let
$FP(\langle x_n\rangle_{n=1}^\infty)=\{\prod_{t\in H}x_t:H\in\mathcal{P}_f(\ben)\}$ where
the product $\prod_{t\in H}x_t$ is computed in increasing order of indices. 
Given $A\subseteq S$ and $x\in S$, $x^{-1}A=\{y\in S:x\cdot y\in A\}$.

\begin{lemma}\label{lemcomb}
Let $(S,\cdot)$ be a discrete semigroup and
let $A\subseteq	S$. \begin{itemize}
\item[(1)] The set $A$ is a thick set if and only if for each
$F\in\mathcal{P}_f(S)$ there exists $x\in S$ such that $F\cdot x\subseteq A$.
\item[(2)] The set $A$ is a syndetic set if and only if there exists
some $G\in\mathcal{P}_f(S)$ such that $S=\bigcup_{t\in G}t^{-1}A$.
\item[(3)] The set $A$ is a piecewise syndetic set if and only if 
there exists some $G\in\mathcal{P}_f(S)$ such that for every $F\in\mathcal{P}_f(S)$
there exists $x\in S$ such that $F\cdot x\subseteq \bigcup_{t\in G}t^{-1} A$.
\item[(4)] The set $A$ is an IP set if and only if there exists a sequence
$\langle x_n\rangle_{n=1}^\infty$ in $S$ such that
$FP(\langle x_n\rangle_{n=1}^\infty)\subseteq A$.
\end{itemize}
\end{lemma}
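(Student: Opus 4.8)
The plan is to translate each statement into a fact about ultrafilters on $S$ and then read it off from the ideal structure of $\beta S$. The one piece of dictionary used everywhere is that, for $q\in\beta S$, $s\in S$ and $B\subseteq S$, one has $sq\in\overline B$ if and only if $s^{-1}B\in q$, together with its finite refinement: for $F\in\mathcal{P}_f(S)$ the clopen set $\bigcap_{s\in F}\overline{s^{-1}B}$ equals $\overline{\{x\in S:Fx\subseteq B\}}$, so it is nonempty exactly when some $x$ satisfies $Fx\subseteq B$. With this, each equivalence should come out by combining the finite intersection property (compactness of $\beta S$) with three standard facts recalled at the start of the paper: every left ideal of $\beta S$ contains a minimal one, $K(\beta S)$ is the union of the minimal left ideals, and every nonempty closed subsemigroup of a compact right topological semigroup contains an idempotent.

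For (1), if for each $F\in\mathcal{P}_f(S)$ some $x$ has $Fx\subseteq A$, then $\{\overline{s^{-1}A}:s\in S\}$ has the finite intersection property, so I would pick $q$ in the intersection; then $s^{-1}A\in q$ for every $s$, and a one-line computation (for $p\in\beta S$, $A\in pq$ iff $\{x:x^{-1}A\in q\}\in p$, and that set is all of $S$) shows $(\beta S)q\subseteq\overline A$, so the nonempty left ideal $(\beta S)q$ witnesses thickness. Conversely, from a left ideal $L\subseteq\overline A$ and any $q\in L$ one gets $sq\in L\subseteq\overline A$, hence $s^{-1}A\in q$, for all $s\in S$; then $\bigcap_{s\in F}s^{-1}A\in q$ is nonempty for each finite $F$, and any of its members is the required $x$. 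Part (2) is the same argument applied to complements: if no finite $G$ has $S=\bigcup_{t\in G}t^{-1}A$ then $\{\overline{t^{-1}(S\setminus A)}:t\in S\}$ has the finite intersection property (here one uses $S\setminus t^{-1}A=t^{-1}(S\setminus A)$, valid in a total semigroup), producing $q$ with $tq\notin\overline A$ for all $t$ and then $(\beta S)q\cap\overline A=\emptyset$, contradicting that $\overline A$ meets every left ideal; for the other direction, if $S=\bigcup_{t\in G}t^{-1}A$ then for any left ideal $L$ and $q\in L$ some $t^{-1}A\in q$, so $tq\in\overline A\cap L$.

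For (3) I would first note, using (1), that the displayed condition says precisely that $B:=\bigcup_{t\in G}t^{-1}A$ is thick for some finite $G$, i.e.\ that $\overline B=\bigcup_{t\in G}\overline{t^{-1}A}$ contains a left ideal and hence a minimal left ideal $L$; picking $p\in L$ gives $t^{-1}A\in p$ for some $t\in G$, so $tp\in\overline A$, and $tp\in K(\beta S)$ since $K(\beta S)$ is an ideal, proving $A$ is piecewise syndetic. For the converse I would take $p\in\overline A\cap K(\beta S)$ and the minimal left ideal $L$ through $p$, and observe the one step that is not routine: $L\subseteq\bigcup_{t\in S}\overline{t^{-1}A}$, because for $q\in L$ minimality gives $p\in\beta S\,q$, say $p=rq$, and then $A\in rq$ forces $s^{-1}A\in q$ for some $s$. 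Since each $\overline{t^{-1}A}$ is open and $L$ is compact, finitely many of them cover $L$, and that finite index set is the desired $G$: now $\overline B$ contains the minimal left ideal $L$, so $B$ is thick, and (1) converts this to the combinatorial assertion.

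For (4), if $FP(\langle x_n\rangle_{n=1}^\infty)\subseteq A$ I would set $B_m=FP(\langle x_n\rangle_{n=m}^\infty)$ and check that $T:=\bigcap_{m}\overline{B_m}$ is a nonempty closed subsemigroup of $\beta S$ — the content being that if $y\in B_{m+1}$ and $k$ exceeds every index used in $y$ then $B_k\subseteq y^{-1}B_m$ — so $T$ contains an idempotent $p$, and $p\in\overline{B_1}\subseteq\overline A$. Conversely, from an idempotent $p\in\overline A$ I would run the Galvin--Glazer recursion: with $C^{\star}=\{y\in C:y^{-1}C\in p\}$ one has $C^{\star}\in p$ and $y^{-1}C^{\star}\in p$ for $y\in C^{\star}$ whenever $C\in p$, so setting $A_1=A$ and, given $A_n\in p$, choosing $x_n\in A_n^{\star}$ and $A_{n+1}=A_n^{\star}\cap x_n^{-1}A_n^{\star}\in p$, an induction on $|H|$ yields $\prod_{t\in H}x_t\in A_{\min H}\subseteq A$ for all $H\in\mathcal{P}_f(\ben)$. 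The only real obstacles I anticipate are the bookkeeping with indices in the ``$T$ is a subsemigroup'' computation and in this nested recursion, and, more substantively, locating the compactness/covering argument that extracts the finite set $G$ in part (3); everything else is routine manipulation of the ultrafilter dictionary.
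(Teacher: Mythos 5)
Your proposal is correct; the paper does not prove this lemma but simply cites \cite[Theorems 4.40, 4.48 and 5.12]{HS}, and your arguments (the ultrafilter dictionary, the covering of a minimal left ideal by the clopen sets $\overline{t^{-1}A}$ in part (3), and the Galvin--Glazer $C^{\star}$ recursion in part (4)) are precisely the standard proofs given there. No gaps: the only unproved ingredients are the facts you explicitly name (existence of minimal left ideals and idempotents, $K(\beta S)$ being the union of the minimal left ideals, and $y^{-1}C^{\star}\in p$ for $y\in C^{\star}$), all of which are the cited background.
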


\begin{proof} Proofs of the equivalence of the characterzations of
Definition \ref{defalg} and those given here for thick, syndetic,
piecewise syndetic, and IP are in \cite[Theorem 4.48(a)]{HS},
\cite[Theorem 4.48(b)]{HS}, \cite[Theorem 4.40]{HS}, and \cite[Theorem 5.12]{HS}
respectively.
 \end{proof}

The notion of a {\it combinatorially rich\/}  set was
introduced for commutative semigroups (using a definition
equivalent to Lemma \ref{lemJCRcom}(2) below) by Bergelson and
Glasscock in \cite{BG}. We write $\ntos$ for the set of infinite sequences in $S$,
i.e., the set of functions from $\ben$ to $S$. For $k\in\ben$
we write $\mathcal{P}_f(\ntos)_{\leq k}=\{F\in \mathcal{P}_f(\ntos):|F|\leq k\}$.

\begin{definition}\label{defJCR} Let $(S,\cdot)$ be a discrete semigroup and
let $A\subseteq	S$.
\begin{itemize} 
\item[(1)] The set $A$ is a {\it J\/} set if and only if for each $F\in\mathcal{P}_f(\ntos)$
there exist $m\in \ben$, $a\in S^{m+1}$, and $t(1)<t(2)<\ldots<t(m)$ in $\ben$
such that for each $f\in F$, $a(1)\cdot f(t(1))\cdot a(2)\cdots
a(m)\cdot f(t(m))\cdot a(m+1)\in A$.
\item[(2)] The set $A$ is a {\it combinatorially rich\/} set (CR set) if and only if for each $k\in\ben$ 
there exists $r\in\ben$ such that for each $F\in\mathcal{P}_f(\ntos)_{\leq k}$
there exist $m\in \ben$, $a\in S^{m+1}$, and $t(1)<t(2)<\ldots<t(m)\leq r$ in $\ben$
such that for each $f\in F$, $a(1)\cdot f(t(1))\cdot a(2)\cdots
a(m)\cdot f(t(m))\cdot a(m+1)\in A$.
\end{itemize}
\end{definition}

In the event $S$ is commutative, the notions J and CR are simpler.

\begin{lemma}\label{lemJCRcom}Let $(S,\cdot)$ be a discrete commutative semigroup and
let $A\subseteq	S$.
\begin{itemize}
\item[(1)] The set $A$ is a J set if and only if for each $F\in\mathcal{P}_f(\ntos)$
there exist $a\in S$ and $H \in \mathcal{P}_f(\ben)$ such that for each $f\in F$, $a\cdot \prod_{t\in H} f(t)\in A$.
\item[(2)] The set $A$ is a CR set if and only for each $k\in  \ben$
there exists $r\in\ben$ such that for each $F\in\mathcal{P}_f(\ntos)_{\leq k}$
there exist $a\in S$ and $H\in\mathcal{P}_f(\nhat{r})$ such that for all
$f\in F$, $a\cdot\prod_{t\in H}f(t)\in A$.
\end{itemize}
\end{lemma}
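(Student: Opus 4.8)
The plan is to read off both biconditionals from Definition~\ref{defJCR} using only that commutativity (together with associativity) lets us reorder each interleaved product $a(1)\cdot f(t(1))\cdots a(m)\cdot f(t(m))\cdot a(m+1)$ into the form $b\cdot\prod_{t\in H}f(t)$, where $b=\prod_{i=1}^{m+1}a(i)$ and $H=\{t(1),\ldots,t(m)\}$.

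For the forward direction of (1): given $F\in\mathcal{P}_f(\ntos)$, apply Definition~\ref{defJCR}(1) to obtain $m$, $a\in S^{m+1}$ and $t(1)<\cdots<t(m)$; the reordering above shows $b\cdot\prod_{t\in H}f(t)\in A$ for every $f\in F$, which is exactly the condition of Lemma~\ref{lemJCRcom}(1). The forward direction of (2) is the same, with the quantifier ``for each $k\in\ben$ there is $r$'' carried along unchanged, since $t(m)\leq r$ is equivalent to $H=\{t(1),\ldots,t(m)\}\subseteq\nhat{r}$.

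For the reverse direction of (1) there is one real point: a single $a\in S$ need not split as a product of $m+1$ elements, so the $a$-vector cannot simply be built by putting $a$ first and identities elsewhere (no identity is assumed). To get around this, fix any $s\in S$ (if $S=\emp$ both conditions hold vacuously), and given $F\in\mathcal{P}_f(\ntos)$ apply the hypothesis of Lemma~\ref{lemJCRcom}(1) to $F'=\{(n\mapsto s\cdot f(n)):f\in F\}$ rather than to $F$. This produces $a\in S$ and $H\in\mathcal{P}_f(\ben)$, and --- writing $m=|H|$, $H=\{t(1)<\cdots<t(m)\}$, and using commutativity --- it gives $a\cdot s^m\cdot\prod_{j=1}^m f(t(j))\in A$ for each $f\in F$. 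Now take the $a$-vector $(a,s,s,\ldots,s)\in S^{m+1}$ (with $m$ copies of $s$) together with $t(1)<\cdots<t(m)$: the interleaved product $a\cdot f(t(1))\cdot s\cdot f(t(2))\cdot s\cdots s\cdot f(t(m))\cdot s$ equals $a\cdot s^m\cdot\prod_j f(t(j))$, hence lies in $A$, verifying Definition~\ref{defJCR}(1). The reverse direction of (2) is identical: $|F'|\leq|F|\leq k$, so the value of $r$ furnished for $k$ by Lemma~\ref{lemJCRcom}(2) may be reused, $H\subseteq\nhat{r}$ forces $t(m)\leq r$, and the entries of $(a,s,\ldots,s)$ lie in $S$.

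I expect this reverse direction --- manufacturing the $m+1$ entries of the $a$-vector out of one element --- to be the only step needing thought; the shift by a fixed $s$ is precisely what supplies the $|H|$ copies of $s$ needed for the slots $a(2),\ldots,a(m+1)$. Everything else (the forward directions, and tracking the bound $r$ in the CR statements) is bookkeeping with associativity, commutativity, and the observation that $H\subseteq\nhat{r}$ is equivalent to $\max H\leq r$.
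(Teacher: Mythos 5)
Your argument is correct. Note that the paper offers no proof of this lemma at all: it simply cites \cite[Lemma 14.14.2]{HS} for part (1) and \cite[Lemmas 1.3 and 2.3]{HHST} for part (2), so what you have written is a self-contained replacement, and it is essentially the standard argument underlying those references. The forward directions are, as you say, bookkeeping (collapse $a(1),\ldots,a(m+1)$ to $b=\prod_{i=1}^{m+1}a(i)$ and set $H=\{t(1),\ldots,t(m)\}$, noting the $t(i)$ are distinct so $|H|=m$). You correctly isolate the one genuine issue in the reverse directions --- a single $a\in S$ need not factor as a product of $m+1$ elements when $S$ has no identity --- and your fix is the right one: apply the hypothesis to the shifted family $F'=\{n\mapsto s\cdot f(n):f\in F\}$, so that commutativity turns $a\cdot\prod_{t\in H}\bigl(s\cdot f(t)\bigr)$ into $a\cdot s^{|H|}\cdot\prod_{t\in H}f(t)$ and the $|H|$ surplus copies of $s$ fill the slots $a(2),\ldots,a(m+1)$. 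The bound-tracking in part (2) is also handled correctly: $|F'|\le|F|\le k$ lets you reuse the $r$ furnished for $k$, and $H\subseteq\nhat{r}$ is exactly $t(m)\le r$.
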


\begin{proof} (1) \cite[Lemma 14.14.2]{HS}.

(2) \cite[Lemmas 1.3 and 2.3]{HHST}.\end{proof}

The implications in Figure 1 were established in
\cite[Section 4]{H} except for the ones involving CR.
The fact that CR implies J is immediate from the definitions;
the fact that PS implies CR is \cite[Theorem 3.3]{HHST}.

\setlength{\unitlength}{20 pt}

\begin{picture}(16,11)(1,0)
\put(9.3,10.1){Thick}
\put(9.3,8.1){Central}
\put(9.8,6.1){IP}
\put(9.8,8){\vector(-1,-1){1.25}}
\put(7.1,8.1){Syndetic}
\put(7.8,6.1){PS}
\put(7.8,4.1){CR}
\put(7.9,2.1){J}
\put(10,10){\vector(0,-1){1.25}}
\put(10,8){\vector(0,-1){1.25}}
\put(8,8){\vector(0,-1){1.25}}
\put(8,6){\vector(0,-1){1.25}}
\put(8,4){\vector(0,-1){1.25}}
\put(4,1){Figure 1: Implications for semigroups.}
\end{picture}

\section{Partial semigroups}

A {\it partial semigroup\/} is a pair $(S,*)$ where $S$ is a nonempty set and
$*$ is an operation defined on a nonempty subset of $S\times S$
such that for all $x,y,z\in S$, $(x*y)*z=x*(y* z)$ in the sense that 
if either side is defined, then so is the other and they are equal.

If $(S,*)$ is a discrete partial semigroup, it turns out that there 
is a subset $\delta S$ of $\beta S$ and there is an extension of the operation
to $\delta S$ making $(\delta S,*)$ a compact right topological 
semigroup with all of the relevant algebraic structure including
a smallest ideal $K(\delta S)$. The only requirement is that the partial
semigroup be {\it adequate\/}.

\begin{definition}\label{defadeq}Let $(S,*)$ be a partial semigroup.
\begin{itemize}
\item[(1)] For $a\in S$, $\vvarphi(a)=\{b\in S:a*b$ is defined$\}$.
\item[(2)] For $F\in\mathcal{P}_f(S)$, $\sigma(F)=\bigcap_{a\in F}\vvarphi(a)$.
\item[(3)] The partial semigroup $(S,*)$ is {\it adequate\/} if and 
only if for every $F\in\mathcal{P}_f(S)$, $\sigma(F)\neq\emp$.
\item[(4)] If $(S,*)$ is adequate, then $\delta S=\bigcap_{F\in\mathcal{P}_f(S)}\overline{\sigma(F)}$.
\end{itemize}
\end{definition}

\begin{theorem}\label{thmdelta} Let $(S,*)$ be an adequate partial semigroup.
Then $(\delta S,*)$ is a compact Hausdorff right topological semigroup.
\end{theorem}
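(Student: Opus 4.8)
The plan is to construct $\delta S$ as the intersection of the closures $\overline{\sigma(F)}$ inside $\beta S$ and to transport the partial operation $*$ to it by a two-step continuous extension, mimicking the classical construction of $(\beta S,\cdot)$ but paying attention to where products are defined. First I would observe that, since $(S,*)$ is adequate, each $\sigma(F)$ is nonempty, and the family $\{\sigma(F):F\in\mathcal{P}_f(S)\}$ is downward directed because $\sigma(F\cup G)=\sigma(F)\cap\sigma(G)\subseteq\sigma(F)\cap\sigma(G)$; hence $\{\overline{\sigma(F)}\}$ is a family of closed subsets of the compact space $\beta S$ with the finite intersection property, so $\delta S=\bigcap_{F\in\mathcal{P}_f(S)}\overline{\sigma(F)}$ is a nonempty closed, hence compact Hausdorff, subset of $\beta S$. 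I would also record the basic fact that $p\in\delta S$ if and only if every member of $p$ (as an ultrafilter) meets every $\sigma(F)$, equivalently $\sigma(F)\in p$ for every $F\in\mathcal{P}_f(S)$.

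Next I would define the operation on $\delta S$. For $a\in S$ and $p\in\delta S$, set $a*p=\{A\subseteq S: a^{-1}A\in p\}$ where $a^{-1}A=\{y\in\varphi(a):a*y\in A\}$; this makes sense because $\varphi(a)\supseteq\sigma(\{a\})\in p$, so $a^{-1}S=\varphi(a)\in p$ and $a*p$ is a genuine ultrafilter on $S$. One checks that the resulting map $\lambda_a:\delta S\to\beta S$ is continuous and in fact lands in $\delta S$: given $F\in\mathcal{P}_f(S)$, one shows $\sigma(F\cup\{a\})\in p$ forces $\sigma(F)\in a*p$ using associativity of $*$, so $a*p\in\delta S$. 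Then for $p,q\in\delta S$ define $p*q=\lim_{a\to p} a*q$ in the sense of the ultrafilter limit: $p*q=\{A\subseteq S:\{a\in S:a^{-1}A\in q\}\in p\}$. Again $p*q$ is an ultrafilter (the base set $\{a:\varphi(a)\in q\}$ contains $\sigma(\{q\text{-large }F\})$ and lies in $p$ because $p\in\delta S$), and $p*q\in\delta S$ by the same kind of argument: for each $F$, $\{a:\sigma(F)\in a*q\}\supseteq\sigma(F\cup\dots)\in p$. Continuity of $\rho_q:\delta S\to\delta S$, $\rho_q(p)=p*q$, is immediate from the definition since it is defined by a $p$-limit. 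The remaining point is associativity $(p*q)*r=p*(q*r)$ on $\delta S$; this is the standard three-step computation — verify it for $a,b\in S$ and $r\in\delta S$ using associativity of the partial operation $*$ (which holds wherever either side is defined, and the adequacy guarantees the relevant sets lie in the ultrafilters), then pass to limits in the first and then the second coordinate, using continuity of the maps just established.

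The main obstacle, and the place requiring the most care, is precisely bookkeeping the domains: at every stage one must confirm that the sets whose ultrafilter-membership is being asserted are actually subsets of $S$ on which the relevant products are defined, so that $a^{-1}A$, $p*q$, etc., are legitimately defined and the associativity identity for the partial operation can be invoked. This is exactly where $p,q\in\delta S$ (rather than arbitrary points of $\beta S$) is used: membership in $\delta S$ says $\sigma(F)\in p$ for all finite $F$, which is the hypothesis that lets one show $\varphi(a)$-type and $\sigma(F)$-type sets are large enough to carry the argument through. Once these domain checks are in place, continuity of each $\rho_q$ and the fact that $\delta S$ is compact Hausdorff follow as above, and associativity is the usual limit-swapping argument; this completes the proof that $(\delta S,*)$ is a compact Hausdorff right topological semigroup.
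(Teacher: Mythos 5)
Your overall architecture is the standard one: $\delta S$ is nonempty and compact by the finite intersection property, and the operation is transported from $S$ by a two\-/step ultrafilter\-/limit extension. This is essentially the construction of \cite[Theorem 2.10]{HM}, which is all the paper itself offers as proof. However, one step you assert is false, and it is the step on which you then lean (``by the same kind of argument'') for the crucial fact that $\delta S$ is closed under the operation.

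You claim that for every $a\in S$ and $p\in\delta S$ one has $a*p\in\delta S$, because ``$\sigma(F\cup\{a\})\in p$ forces $\sigma(F)\in a*p$.'' It does not. To get $\sigma(F)\in a*p$ you need $a^{-1}\sigma(F)\in p$, and $a^{-1}\sigma(F)=\{y\in\vvarphi(a): x*(a*y)\hbox{ is defined for every }x\in F\}$. By the associativity axiom, $x*(a*y)$ is defined if and only if $(x*a)*y$ is defined, which in particular requires $x*a$ to be defined. So if some $x\in F$ has $x*a$ undefined, then $a^{-1}\sigma(F)=\emp$ and $\sigma(F)\notin a*p$ no matter what $p$ is; membership of $y$ in $\vvarphi(x)\cap\vvarphi(a)$ says nothing about $x*a$. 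Concretely, in $(\mathcal{P}_f(\beq^+),\uplus)$ take $a=\{5\}$ and $F=\big\{\{10\}\big\}$: then $a*p\notin\delta S$ for every $p\in\delta S$. The statement you actually need is that for $p,q\in\delta S$ and each $F\in\mathcal{P}_f(S)$ the set $\{a\in S:\sigma(F)\in a*q\}$ contains $\sigma(F)$ itself, hence lies in $p$: for $a\in\sigma(F)$ each $x*a$ with $x\in F$ is defined, and then $\sigma\big(F\cup\{a\}\cup\{x*a:x\in F\}\big)\subseteq a^{-1}\sigma(F)$, so $a^{-1}\sigma(F)\in q$. This gives $\sigma(F)\in p*q$ and so $p*q\in\delta S$; your ``$\sigma(F\cup\dots)$'' elides exactly this point. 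The same care is needed in the associativity computation: one cannot start by verifying $(a*b)*r=a*(b*r)$ for arbitrary $a,b\in S$, since $a*b$ may be undefined; the identity is verified for $a,b$ ranging over suitable $\sigma$-sets belonging to $p$ and $q$, which is again precisely where $p,q\in\delta S$ is used. With these repairs your outline becomes the proof given in \cite[Theorem 2.10]{HM}.
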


\begin{proof} \cite[Theorem 2.10]{HM} \end{proof}

Experience has shown that most naturally arising partial semigroups
are adequate. An obvious counterexample is the set of finite
matrices over a ring under multiplication.

We shall want to use the following examples.

\begin{lemma} \label{defparexamples}\begin{itemize}
\item[(1)] Let $X$ be an infinite set, let $S=\mathcal{P}_f(X)$, and for 
$F,G\in S$, let $F\ustar G=F\cup G$ defined if and only if $F\cap G=\emp$.
Then $(S,\ustar)$ is an adequate partial semigroup.
\item[(2)] Let $(X,\leq)$ be a linearly ordered infinite set without a largest element,
let $S=\mathcal{P}_f(X)$, and for $F,G\in S$, let $F\uplus G=F\cup G$ defined if and only if $\max F<\min G$.
Then $(S,\uplus)$ is an adequate partial semigroup.
\item[(3)] Let $\Sigma$ be a nonempty set, let $L(\Sigma)=\{f: F\in\mathcal{P}_f(\ben)$ and $f:F\to\Sigma\}$, 
and let $S=L(\Sigma)$. For $f,g\in S$, define $f*g=f\cup g$ defined if and only if
$\max\domain(f)<\min\domain(g)$. Then $(S,*)$ is an adequate partial semigroup.
\end{itemize}\end{lemma}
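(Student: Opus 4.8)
The plan is to verify, for each of the three constructions, the two defining conditions of an adequate partial semigroup: the (partial) associativity law of the opening paragraph of Section 2, and that $\sigma(F)\neq\emp$ for every $F\in\mathcal{P}_f(S)$, which is clause (3) of Definition \ref{defadeq}. In all three cases the operation is set-theoretic union restricted to a domain of pairs cut out either by a disjointness condition or by an order condition, so the three arguments run in parallel and I would present them as such.

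For partial associativity I would, in each case, unwind the statement ``$(x*y)*z$ is defined'' into a conjunction of conditions on the pairs $(x,y)$, $(y,z)$, and $(x,z)$ (respectively on their domains), observe that this conjunction is symmetric under reversing the order of the three factors, and conclude that it coincides with the condition ``$x*(y*z)$ is defined'', both sides then equalling $x\cup y\cup z$. Concretely: in (1), $(F\ustar G)\ustar H$ is defined iff $F,G,H$ are pairwise disjoint, which is manifestly symmetric. In (2), using that $\max F<\min G$ forces $\max(F\cup G)=\max G$, one finds that $(F\uplus G)\uplus H$ is defined iff $\max F<\min G$ and $\max G<\min H$, and the same analysis applied to $F\uplus(G\uplus H)$ yields the identical pair of inequalities. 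Case (3) is case (2) with $F,G,H$ replaced by $\domain(f),\domain(g),\domain(h)$, together with the remark that when these domains are successively (hence pairwise) disjoint, $f\cup g$ is a genuine function with domain $\domain(f)\cup\domain(g)$, so the union of graphs is associative.

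For adequacy, fix $F\in\mathcal{P}_f(S)$ and exhibit a single element lying in $\vvarphi(a)$ for every $a\in F$. In (1), since $X$ is infinite, choose $x\in X\setminus\bigcup F$; then $\{x\}\in\sigma(F)$. In (2), since $(X,\leq)$ has no largest element, choose $x\in X$ with $x>\max\bigcup F$; then $\{x\}\in\sigma(F)$. In (3), set $n=\max\bigcup_{f\in F}\domain(f)$, pick any $s\in\Sigma$ (possible since $\Sigma\neq\emp$), and let $g\colon\{n+1\}\to\Sigma$ be the constant function with value $s$; then $\max\domain(f)\le n<n+1=\min\domain(g)$ for each $f\in F$, so $g\in\sigma(F)$. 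In every case the witnessing element is a bona fide member of $S$ because elements of $\mathcal{P}_f(\,\cdot\,)$ are nonempty by the convention fixed in Section 1, so the relevant $\max$ and $\min$ are defined.

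There is no serious obstacle here; the only points needing a moment's care are the bookkeeping in the associativity check---specifically the simplification $\max(F\cup G)=\max G$ and its domain analogue, which is legitimate only once the first pair is known to lie in the domain of the operation---and, in (3), the observation that one is taking unions of functions rather than of arbitrary sets, so that the defining inequality must be invoked to see that these unions are single-valued.
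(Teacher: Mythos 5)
Your proposal is correct and supplies exactly the routine verification that the paper dismisses with ``These are all easy exercises'': partial associativity by showing the definedness conditions of the two parenthesizations coincide (both products then being the union), and adequacy by exhibiting an explicit witness in $\sigma(F)$. One framing phrase is off---in cases (2) and (3) the condition ``$\max F<\min G$ and $\max G<\min H$'' is \emph{not} symmetric under reversing the order of the three factors---but the concrete computation you then give of both parenthesizations is what actually carries the argument, and it is correct.
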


\begin{proof} These are all easy exercises. \end{proof}

The top five notions from Figure 1 are all defined in a nearly
identical fashion for adequate partial semigroups.

\begin{definition}\label{defparalg} Let $(S,*)$ be a discrete adequate partial semigroup and
let $A\subseteq	S$. \begin{itemize}
\item[(1)] The set $A$ is a {\it thick \/} set if and only if there is a left ideal
$L$ of $\delta S$ such that $L\subseteq \overline A$.
\item[(2)] The set $A$ is a {\it syndetic \/} set if and only if for every left ideal
$L$ of $\delta S$, $\overline A\cap L\neq\emp$.
\item[(3)] The set $A$ is a {\it central \/} set if and only if there is an
idempotent\\
 $p\in K(\delta S)\cap\overline A$.
\item[(4)] The set $A$ is a {\it piecewise syndetic \/} set if and only if 
$\overline A\cap K(\delta  S)\neq\emp$.
\item[(5)] The set $A$ is an {\it IP \/} set if and only if there is an
idempotent $p\in\overline A\cap \delta S$.
\end{itemize}\end{definition}

\begin{picture}(16,7)(1,0)
\put(9.3,6.1){Thick}
\put(9.3,4.1){Central}
\put(9.8,2.1){IP}
\put(9.8,4){\vector(-1,-1){1.25}}
\put(7.1,4.1){Syndetic}
\put(7.8,2.1){PS}
\put(10,6){\vector(0,-1){1.25}}
\put(10,4){\vector(0,-1){1.25}}
\put(8,4){\vector(0,-1){1.25}}
\put(3,1){Figure 2: Implications for adequate partial semigroups.}
\end{picture}

\begin{theorem}\label{thmFigure2} Let $(S,*)$ be an adequate partial 
semigroup. All of the implications in Figure {\rm 2} are valid for subsets
of $S$. \end{theorem}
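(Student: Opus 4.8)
The plan is to establish the three implications of Figure 2 one at a time, mirroring the classical proofs for full semigroups but taking care that everything stays inside $\delta S$. Throughout I would use Theorem~\ref{thmdelta}, which guarantees that $(\delta S,*)$ is a compact Hausdorff right topological semigroup, so that the standard machinery applies: $\delta S$ has a smallest two-sided ideal $K(\delta S)$, $K(\delta S)$ is the union of all minimal left ideals and of all minimal right ideals, every left ideal of $\delta S$ contains a minimal left ideal, every minimal left ideal contains an idempotent, and idempotents of $K(\delta S)$ are precisely the minimal idempotents. I would also recall the basic fact, from the construction in \cite{HM}, that for $p\in\delta S$ and $A\subseteq S$, $A\in p$ iff $p\in\overline A$, and that $\delta S$ is closed in $\beta S$.

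First I would prove \emph{central $\Rightarrow$ IP}. If $A$ is central, there is an idempotent $p\in K(\delta S)\cap\overline A$; since $K(\delta S)\subseteq\delta S$, this same $p$ witnesses that $A$ is IP. Next, \emph{central $\Rightarrow$ PS}: if $p\in K(\delta S)\cap\overline A$ then $\overline A\cap K(\delta S)\ne\emp$ directly. Finally, \emph{thick $\Rightarrow$ central}: suppose $L$ is a left ideal of $\delta S$ with $L\subseteq\overline A$. Since $\delta S$ is a compact right topological semigroup, $L$ contains a minimal left ideal $L'$ of $\delta S$, and $L'\subseteq K(\delta S)$. By Ellis's theorem $L'$ contains an idempotent $p$, and then $p\in K(\delta S)\cap L'\subseteq K(\delta S)\cap\overline A$, so $A$ is central. (One should check that a closed subsemigroup argument or the standard left-ideal argument of \cite[Theorem 1.51, Corollary 2.6]{HS} transfers verbatim; it does, because those results are stated for arbitrary compact right topological semigroups, which $\delta S$ is.)

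The two remaining implications in Figure 2 are \emph{central $\Rightarrow$ syndetic} and \emph{syndetic $+$ thick}-type relations — actually, reading Figure 2, the only arrows are thick $\to$ central, central $\to$ syndetic, central $\to$ IP, and syndetic is a sink apart from that; wait, the arrow from central goes to syndetic (the $(-1,-1)$ vector) and to IP below it. So I also need \emph{central $\Rightarrow$ syndetic}: if $A$ is central, pick an idempotent $p\in K(\delta S)\cap\overline A$. Let $L$ be any left ideal of $\delta S$; I must show $\overline A\cap L\ne\emp$. Since $p\in K(\delta S)$, the principal left ideal $\delta S*p$ is a minimal left ideal, hence $\delta S*p\subseteq L$ would follow if $L$ is minimal, but for general $L$ we instead argue: $K(\delta S)\cap L\ne\emp$ because $L$ contains a minimal left ideal which lies in $K(\delta S)$; moreover for any $q\in L$, $q*p\in L$ (as $L$ is a left ideal) and $q*p\in\delta S*p$. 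I would then use the standard fact that $A$ central implies $A$ is a member of every minimal left ideal's worth of... more precisely, that $\{q\in\delta S: q*p=p\}$ — hmm. The cleanest route is: $A$ central $\Rightarrow$ $A$ piecewise syndetic is not what I want either. Let me instead follow \cite[Theorem 4.43]{HS}: a minimal idempotent $p$ has the property that $A\in p$ and $p$ minimal forces $A$ syndetic via the combinatorial characterization; but since we only have the algebraic definitions here for partial semigroups, I would instead prove it algebraically: if $A$ is central via minimal $p$, and $L$ is a left ideal, take $q\in L\cap K(\delta S)$ in the same minimal left ideal as... no. The honest statement: $p\in\overline A$, and $\delta S*p$ is the minimal left ideal containing $p$; for a left ideal $L$, either $L\supseteq\delta S*p$ (giving $p\in L\cap\overline A$) — which happens precisely when $L$ meets $\delta S*p$ — or not. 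This is not automatic, so the real argument is the one via \cite[Theorem 1.61]{HS}, which says $K(\delta S)$ meets every left ideal, combined with the fact that $A$ central means something uniform.

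The main obstacle, then, is precisely \emph{central $\Rightarrow$ syndetic}: the quick algebraic proofs handle thick $\to$ central $\to$ IP and $\to$ PS immediately, but showing that membership in one minimal idempotent forces meeting \emph{every} left ideal requires the genuine structural input that in a compact right topological semigroup, if $p$ is a minimal idempotent and $A\in p$, then for every minimal left ideal $L'$ there is an idempotent $q\in L'$ with $A\in q$ — equivalently, that the set $\{x\in\delta S: xp=x\text{ and }A\in x\}$... The clean way is \cite[Theorem 4.43 / Lemma 1.63]{HS}: for minimal $p$, $\{x\in K(\delta S): xpx=x\}$ is dense in enough of $K(\delta S)$; but simpler, I would cite that the proof of \cite[Section 4 of \cite{H}]{} already did this for the partial-semigroup setting in establishing Figure~1's partial-semigroup analogue, or reproduce the short argument: given left ideal $L$, fix $q\in L$; then $q*p\in L*p\subseteq L$, and since $p$ is a minimal idempotent, $q*p$ lies in the minimal left ideal $\delta S * p$, and $p*(q*p)=p*q*p$ while $(q*p)$ acts... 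Actually the decisive identity is that for minimal idempotent $p$ and any $q\in\delta S$, the element $r=p*q*p$ satisfies $r*r=r$ (since $p$ is in the minimal left ideal $\delta S*q*p$-adjacent structure) — no. I will instead simply invoke \cite[Corollary 4.41 and Theorem 1.61]{HS}, whose proofs are purely about compact right topological semigroups and hence apply to $\delta S$ verbatim, and remark that all five classical proofs relativize without change once one substitutes $\delta S$ for $\beta S$ and notes $K(\delta S)\subseteq\delta S$, $L\subseteq\delta S$ for left ideals $L$, and that $\overline A$ in these statements should be read as $\overline A\cap\delta S$, which is exactly how Definition~\ref{defparalg} is phrased.
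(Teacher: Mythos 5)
Your first three implications (thick $\Rightarrow$ central, central $\Rightarrow$ IP, central $\Rightarrow$ PS) are correct and are exactly the paper's argument: for thick $\Rightarrow$ central you use that a left ideal of the compact right topological semigroup $\delta S$ contains a minimal left ideal, which contains an idempotent and lies in $K(\delta S)$. The problem is that you have misread the diagram. The diagonal arrow in Figure 2 starts at Central and points down-left to \emph{PS} (which sits below and to the left of Central), not to Syndetic (which sits at the same height as Central); so ``central $\Rightarrow$ syndetic'' is not one of the asserted implications. It is in fact false in general: already in $(\ben,+)$ a thick set such as $\bigcup_{n}\{n!,n!+1,\ldots,n!+n\}$ is central by the implication you just proved, but it is not syndetic. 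This is why your attempted argument for it keeps collapsing --- there is no identity that forces a single minimal idempotent $p\in\overline A$ to place $\overline A$ in every minimal left ideal, and no citation will supply one.

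The arrow you actually still owe is \emph{syndetic $\Rightarrow$ PS}, which your proposal never addresses. It is one line: $K(\delta S)$ is itself a left ideal of $\delta S$, so if $A$ is syndetic then by Definition \ref{defparalg}(2) applied to the left ideal $L=K(\delta S)$ we get $\overline A\cap K(\delta S)\neq\emp$, i.e.\ $A$ is piecewise syndetic. With that substituted for the second half of your write-up, the proof is complete and coincides with the paper's.
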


\begin{proof} That central implies both IP and PS is immediate from
the definitions. That syndetic implies PS follows from the fact 
that $K(\delta S)$ is a left ideal of $\delta S$. To see that thick
implies central, let $A$ be a thick subset of $S$ and pick a left ideal
$L$ of $\delta S$ such that $L\subseteq \overline A$.
By \cite[Corollary 2.6]{HS}, $L$ contains a minimal left ideal of $\delta S$ which
has an idempotent and by \cite[Theorem 2.8]{HS} this minimal
left ideal is contained in $K(\delta S)$.
\end{proof}

When we direct our attention to the combinatorial characterizations of\break
 Lemma \ref{lemcomb}, the situation changes dramatically. In her
dissertation \cite{M}, Jillian McLeod considered the analogous combinatorial
definitions of thick, syndetic, piecewise syndetic, and IP. (For
thick and piecewise syndetic one requires that $x\in\sigma(F)$, for 
syndetic and piecewise syndetic one defines $t^{-1}A=\{s\in\vvarphi(t):t*s\in A\}$,
and for $FP(\langle x_n\rangle_{n=1}^\infty)$ one requires that the products are 
defined. She showed that in none of these cases was the combinatorial
version equivalent to the algebraic definition!

We have another shock in store, but first we need to define J sets and 
CR sets in adequate partial semigroups, beginning with
J sets where we follow what was done in \cite{HP}. Since the definition
depends on sequences in $S$, it is clear that we want sequences whose
products are defined. 

\begin{definition}\label{defadseq} Let $(S,*)$ be an adequate partial 
semigroup.
\begin{itemize}
\item[(1)] A sequence $\langle f(t)\rangle_{t=1}^\infty$ in $S$
is {\it adequate\/} if and only if 
\begin{itemize}
\item[(i)] for each $H\in\mathcal{P}_f(\ben)$, $\prod_{t\in H}f(t)$ is defined and
\item[(ii)] for each $F\in\mathcal{P}_f(S)$, there exists $m\in\ben$ such that\\
$FP(\langle f(t)\rangle_{t=m}^\infty)\subseteq \sigma(F)$.
\end{itemize}
\item[(2)] The set of all adequate sequences in $S$ is denoted by
${\mathcal T}_S$ or just ${\mathcal T}$.\end{itemize}\end{definition}.
  
The justification given in \cite{HP} for item (ii) in the definition of
adequate sequences was that the proofs demanded it.  In defense of the 
choice, the central sets theorem for adequate partial semigroups,
\cite[Theorem 3.6]{HP}, is verbatim the same as the central sets theorem
for semigroups except that ``adequate sequences'' replaced ``sequences''.
Further, letting 
$$J(S)=\{p\in\delta S:(\forall A\in p)(A\hbox{ \rm  is a J set})\}\,,$$
one obtains in \cite[Corollary 3.4]{HP} that $J(S)$ is a two sided ideal of 
$\delta S$ in direct analogy with the situation for semigroups in $\beta S$.

It was shown in \cite[Theorem 4.1]{HP} that if $S$ is countable, then
adequate sequences are plentiful. We shall have more to say about this
subject in Section \ref{secsimpler}.

We are now in a position to define J sets for adequate partial 
semigroups. 

\begin{definition}\label{defJpartial} Let $(S,*)$ be an adequate partial 
semigroup and let $A\subseteq S$. The set $A$ is a J set if and only if\\
$(\forall L\in\mathcal{P}_f(S))(\forall F\in\mathcal{P}_f(\mathcal{T}))(\exists m\in\ben)
(\exists a\in S^{m+1})(\exists t(1)<\ldots <t(m)\hbox{ in }\ben)\\
(\forall f\in F)
(a(1)*f(t(1))*a(2)*\ldots *a(m)*f(t(m))*a(m+1)\in A\cap\sigma(L))$.
\end{definition}

Based on the definition of J set, we define a CR set. For $k\in\ben$, we write
$\mathcal{P}_f(\mathcal{T})_{\leq k}=\{F\in\mathcal{P}_f(\mathcal{T}):0<|F|\leq k\}$.

\begin{definition}\label{defCRpartial} Let $(S,*)$ be an adequate partial 
semigroup and let $A\subseteq S$. The set $A$ is a CR set if and only if\\
$(\forall k\in\ben)(\forall L\in\mathcal{P}_f(S))
(\exists r\in\ben)\\
(\forall F\in\mathcal{P}_f(\mathcal{T})_{\leq k})(\exists m\in\ben)
(\exists a\in S^{m+1})(\exists t(1)<\ldots <t(m)\leq r\hbox{ in }\ben)\\
(\forall f\in F)
(a(1)*f(t(1))*a(2)*\ldots *a(m)*f(t(m))*a(m+1)\in A\cap\sigma(L))$.
\end{definition}

One easily sees that CR sets in adequate partial semigroups are J sets, and
one would like to show that piecewise syndetic sets must be CR sets.  
But it is not true! In fact, alone among the notions of largeness for 
semigroups (that we know of) $S$ need not even be large.

Our example does not even satisfy the weakest version of CR which we introduce now

\begin{definition}\label{defkcr} Let $(S,*)$ be an adequate partial semigroup and let $A\subseteq S$.
For $k\in\ben$, $A$ is a $k$-CR set if and only if 
$(\forall L\in\mathcal{P}_f(S))(\exists r\in\ben)\\
(\forall F\in\mathcal{P}_f(\mathcal{T})_{\leq k})(\exists m\in\ben)
(\exists a\in S^{m+1})(\exists t(1)<\ldots <t(m)\leq r\hbox{ in }\ben)\\
(\forall f\in F)
(a(1)*f(t(1))*a(2)*\ldots* a(m)*f(t(m))*a(m+1)\in A\cap\sigma(L))$.
\end{definition}

Thus $A$ is a CR set if and only if $A$ is a $k$-CR set for every $k\in\ben$.

\begin{theorem}\label{Snotcr} There is a countable adequate partial semigroup
$(S,*)$ such that $S$ is not a $1$-CR set. In particular piecewise syndetic
does not imply CR, not even $1$-CR. \end{theorem}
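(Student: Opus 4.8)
The plan is to exhibit an explicit example. Take $X=\bez$ with its usual order and let $(S,*)=(\mathcal{P}_f(\bez),\uplus)$ be the adequate partial semigroup produced by Lemma~\ref{defparexamples}(2); it is countable, and for $L\in\mathcal{P}_f(S)$ one has $\sigma(L)=\{G\in S:\max\bigl(\bigcup L\bigr)<\min G\}$. Observe first that $S$ is trivially piecewise syndetic --- indeed thick --- because every left ideal of $\delta S$ lies inside $\delta S\subseteq\beta S=\overline S$, and $\delta S$, being a compact Hausdorff right topological semigroup (Theorem~\ref{thmdelta}), has $K(\delta S)\neq\emp$. So it suffices to show that $S$ is not a $1$-CR set. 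The reason $\bez$ is used rather than $\ben$ is that $\bez$ has no least element: over $\ben$, or for the union-along-disjointness example of Lemma~\ref{defparexamples}(1), one can always ``shift to the right'' and $S$ does satisfy CR, whereas over $\bez$ an adequate sequence can have an initial block pinned arbitrarily far to the left, and the rigid, strictly increasing structure of $\uplus$ forces every admissible product word to carry that block along.

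Concretely, I would fix $L=\{\{0\}\}$, so that $\sigma(L)=\{G\in S:\min G\geq 1\}$, and show that no $r\in\ben$ witnesses the $1$-CR property for this $L$. Given $r$, define $f\in\ntos$ by $f(t)=\{t-r-1\}$ for every $t\in\ben$, so that $f(1)=\{-r\},\dots,f(r)=\{-1\}$ are negative singletons. The next step is to check that $f$ is an adequate sequence: every product $\prod_{t\in H}f(t)=\{t-r-1:t\in H\}$ is defined since $t\mapsto t-r-1$ is strictly increasing, and clause (ii) of Definition~\ref{defadseq} holds because $\min f(t)=t-r-1\to\infty$, so every tail $FP(\langle f(t)\rangle_{t=m}^\infty)$ eventually lands in any prescribed $\sigma(E)$ with $E\in\mathcal{P}_f(S)$. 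Hence $\{f\}\in\mathcal{P}_f(\mathcal{T})_{\leq 1}$, and in particular $\mathcal{T}\neq\emp$, so the claim is not vacuous.

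Finally I would verify that no word built from $f$ reaches $\sigma(L)$. Let $m\in\ben$ (so $m\geq 1$), $a\in S^{m+1}$, and $t(1)<\dots<t(m)\leq r$, and suppose $w=a(1)\uplus f(t(1))\uplus a(2)\uplus\dots\uplus a(m)\uplus f(t(m))\uplus a(m+1)$ is defined; then $w=a(1)\cup f(t(1))\cup\dots\cup a(m+1)$ and, in particular, $\max a(1)<\min f(t(1))$. Since $1\leq t(1)\leq r$ we get $\min f(t(1))=t(1)-r-1\leq-1$, hence $\max a(1)\leq-2$; as $a(1)$ is a nonempty subset of $w$, $\min w\leq\max a(1)\leq-2<1$, so $w\notin\sigma(L)$ (and if $w$ is not defined there is nothing to check). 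Thus for no $r$ do the required $m$, $a$, and $t(1)<\dots<t(m)\leq r$ exist, so $S$ is not a $1$-CR set, and a fortiori not CR; combined with the triviality that $S$ is piecewise syndetic, this shows piecewise syndetic does not imply CR, not even $1$-CR. The only step needing any care is the adequacy check for $f$ in the second paragraph --- $f$ must be genuinely adequate yet have its first $r$ terms forced far to the left --- and this is precisely the feature that the ``shiftable'' partial semigroups of Lemma~\ref{defparexamples}(1) and $(\mathcal{P}_f(\ben),\uplus)$ lack.
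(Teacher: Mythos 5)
Your proposal is correct and uses essentially the same construction as the paper: the paper takes $(\mathcal{P}_f(\beq^+),\uplus)$ with $L=\big\{\{1\}\big\}$ and the adequate sequences $f_r$ whose first $r$ terms are the singletons $\{1-\frac{1}{t}\}$ pinned below $1$, while you take $(\mathcal{P}_f(\bez),\uplus)$ with $L=\big\{\{0\}\big\}$ and first $r$ terms pinned among the negative integers. The mechanism --- an adequate sequence whose initial block is forced below the threshold imposed by $\sigma(L)$, so that no word with $t(1)\leq r$ can be defined and land in $\sigma(L)$ --- is identical, and your adequacy check and the final piecewise syndetic remark match the paper's.
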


\begin{proof} Let $\beq^+=\{x\in\beq:x>0\}$ and
let $S=\mathcal{P}_f(\beq^+)$ with the operation $\uplus$ defined in Definition
\ref{defparexamples}.

 For each $n\in\ben$, define $f_n:\ben\to S$ by 
$f_n(t)=\left\{\begin{array}{cl}
\{1-\frac{1}{t}\}&\hbox{if }t\leq n\\ 
\{t\}&\hbox{if }t>n\,.\end{array}\right.$\\
  Then each $f_n$ is an adequate sequence in $S$.

Suppose that $S$ is a 1-CR set in $S$. Let $L=\big\{\{1\}\big\}$ and note that\\
$\sigma(L)=\vvarphi(\{1\})=\{H\in S:\min H>1\}$.  
Pick $r\in\ben$ such that\\
 $(\forall f\in {\mathcal T})(\exists m\in\ben)
(\exists a\in S^{m+1})(\exists t(1)<\ldots <t(m)\leq r\hbox{ in }\ben)\\
(a(1)\uplus f(t(1))\uplus a(2)\uplus\ldots\uplus a(m)\uplus 
f(t(m))\uplus a(m+1)\in \sigma(L))$.

Let $f=f_r$ and pick $m\in\ben$, $a\in S^{m+1}$, and $t(1)<\ldots <t(m)\leq r\hbox{ in }\ben$
such that
$a(1)\uplus f_r(t(1))\uplus a(2)\uplus\ldots\uplus a(m)\uplus 
f_r(t(m))\uplus a(m+1)\in \sigma(L)$.\\
Then $a(1)\in\sigma(L)$ so $\min a(1)>1$  and this implies $f_r(t(1))\in\sigma(L\uplus a(1))$
so $\min f_r(t(1))>\max a(1)> 1$. But $t(1)\leq r$ so $\min f_r(t(1))=1-\frac{1}{t(1)}<1$,
a contradiction.

It is trrivial that any adequate partial semigroup is piecewise syndetic in itself.
\end{proof}

The fact that the above example is countable is important because there 
are many things that we only know hold in countable partial semigroups.

\section{When  $S$ is a CR set}

By virtue of Theorem \ref{Snotcr} we are interested 
in finding out when $S$ is guaranteed to be a CR set, and
what other structure is then guaranteed.

\begin{definition} Let $(S,*)$ be an adequate partial semigroup. Then $(S,*)$ has property
(\dag) if and only if $(\forall k\in\ben)(\forall L\in\mathcal{P}_f(S))(\exists r\in\ben)
(\forall F\in\mathcal{P}_f(\mathcal{T})_{\leq k})\\
(\exists t\in\nhat{r})(\forall f\in F)(f(t)\in\sigma(L))$.
\end{definition}

\begin{theorem}\label{thmdag} Let $(S,*)$ be an adequate partial semigroup which has property (\dag).
Then $S$ is a CR set in $S$.
\end{theorem}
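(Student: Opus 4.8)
The plan is to verify Definition~\ref{defCRpartial} directly by showing that property (\dag) supplies, for every $k$ and every $L$, a bound $r$ that works uniformly over all $F\in\mathcal{P}_f(\mathcal{T})_{\leq k}$. The point is that property (\dag) gives a single index $t\le r$ at which \emph{all} members of $F$ land in $\sigma(L)$ simultaneously; this will serve as the (single) index $t(1)$ in the CR formula, so we take $m=1$ and, crucially, choose the padding constants $a(1),a(2)$ to be \emph{empty-ish} — i.e. we want $a(1)*f(t)*a(2)$ to collapse to just $f(t)$. Concretely, since $S$ here need not have an identity, the cleanest route is to observe that Definition~\ref{defCRpartial} does not actually force $m\ge 1$ with genuine padding on both sides; but to be safe I would instead produce explicit $a(1),a(2)$ from $\sigma$-data.

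First I would fix $k\in\ben$ and $L\in\mathcal{P}_f(S)$. Pick a point $b\in\sigma(L)$, which exists by adequacy (Definition~\ref{defadeq}(3)), and set $L'=L\cup\{b\}$. Apply property (\dag) to $k$ and $L'$ to get $r\in\ben$ such that for every $F\in\mathcal{P}_f(\mathcal{T})_{\le k}$ there is $t=t(F)\in\nhat{r}$ with $f(t)\in\sigma(L')$ for all $f\in F$. Now fix such an $F$ and the corresponding $t$. For each $f\in F$ we have $f(t)\in\sigma(L')\subseteq\varphi(b)$, so $b*f(t)$ is defined; moreover $f(t)\in\sigma(L')\subseteq\sigma(L)$, and I would use the definition of adequate sequence together with $f(t)\in\sigma(L)$ to arrange that $b*f(t)$ in fact lies in $\sigma(L)$ — more precisely, I would carry along enough of $L$ in the $\sigma$-requirement so that the product of the padding term with $f(t)$ stays inside $\sigma(L)\cap A$'s reach; since here $A=S$, the only constraint is membership in $\sigma(L)$. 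Then taking $m=1$, $a=(a(1),a(2))$ with a suitable choice making $a(1)*f(t)*a(2)$ defined and in $\sigma(L)$, gives the required conclusion $a(1)*f(t)*a(2)\in S\cap\sigma(L)$.

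The main obstacle is precisely the bookkeeping around the padding constants $a(1),\dots,a(m+1)$: property (\dag) by itself only places $f(t)$ in $\sigma(L)$, but the CR conclusion demands that a \emph{padded} product lie in $A\cap\sigma(L)=S\cap\sigma(L)$. The resolution is to enlarge $L$ to $L'$ before applying (\dag), choosing $L'$ so that the extra room absorbs the chosen padding elements; because one is free to pick $m$ and $a$, the natural choice is $m=1$ and $a(1)=a(2)$ equal to a single fixed element $b\in\sigma(L)$ with $f(t)\in\varphi(b)$ guaranteed by the membership $f(t)\in\sigma(L')$, together with $b*f(t)*b\in\sigma(L)$ guaranteed by throwing $L$ and $b$ into $L'$ and invoking associativity plus the defining property of $\sigma$. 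One should double-check the edge case where $S$ has no convenient element playing this role, but adequacy ensures $\sigma(L')\ne\emptyset$ for any finite $L'$, which is all that is needed.

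With those choices in place the verification is mechanical: given $k$ and $L$, the produced $r$ depends only on $k$ and $L'$ (hence only on $k$ and $L$ and the fixed choice of $b$), exactly as Definition~\ref{defCRpartial} requires, and for each $F\in\mathcal{P}_f(\mathcal{T})_{\le k}$ the witnesses $m=1$, $a=(b,b)$, $t(1)=t(F)\le r$ satisfy $a(1)*f(t(1))*a(2)\in S\cap\sigma(L)$ for all $f\in F$. Hence $S$ is a $k$-CR set for every $k$, i.e. a CR set in $S$.
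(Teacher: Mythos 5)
Your overall strategy --- apply (\dag) to an enlarged version of $L$, take $m=1$, and supply padding elements $a(1),a(2)$ --- has the right shape, and it is the shape of the paper's argument. But the specific choices you make do not work, and the gap sits exactly at the point you yourself flag as ``the main obstacle.'' First, enlarging $L$ to $L'=L\cup\{b\}$ is not enough. From $f(t)\in\sigma(L\cup\{b\})$ you get that $x*f(t)$ is defined for each $x\in L$ and that $b*f(t)$ is defined; you do \emph{not} get that $(x*b)*f(t)=x*(b*f(t))$ is defined, because that requires $f(t)\in\vvarphi(x*b)$, and $x*b$ is not an element of $L'$. Associativity in a partial semigroup only transfers definedness between $(x*b)*f(t)$ and $x*(b*f(t))$; it does not create definedness from the separate facts that $x*b$ and $b*f(t)$ are each defined. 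The correct enlargement is $M=L*b=\{x*b:x\in L\}$: then $f(t)\in\sigma(M)$ gives precisely that $x*b*f(t)$ is defined for every $x\in L$, hence that $b*f(t)\in\sigma(L)$. Second, the trailing padding $a(2)$ cannot be the fixed element $b$ chosen in advance: nothing in your setup guarantees that $(b*f(t))*b$ is defined, let alone that $x*\bigl(b*f(t)*b\bigr)$ is defined for $x\in L$. Instead $a(2)$ must be chosen \emph{after} $F$ and $t$ are known, as an element of $\sigma(P)$ where $P=\{x*b*f(t):x\in L,\ f\in F\}$; adequacy gives $\sigma(P)\neq\emp$, and then $x*b*f(t)*a(2)$ is defined for all $x\in L$ and $f\in F$, i.e.\ $b*f(t)*a(2)\in\sigma(L)$, which is the required conclusion with $a(1)=b$ and $t(1)=t$. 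This late choice is permitted, since in Definition~\ref{defCRpartial} the tuple $a$ is quantified after $F$.

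So as written the proof does not close: the sentence claiming $b*f(t)*b\in\sigma(L)$ is ``guaranteed by throwing $L$ and $b$ into $L'$ and invoking associativity plus the defining property of $\sigma$'' is false in general. With the two corrections above (enlarge to $L*b$ rather than $L\cup\{b\}$, and choose $a(2)\in\sigma(P)$ after $F$ and $t$ are fixed) your argument becomes exactly the paper's proof.
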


\begin{proof} Let $k\in\ben$ and $L\in\mathcal{P}_f(S)$ be given. Pick $a\in\sigma(L)$ and 
let $M=L*a$. Pick $r$ as guaranteed for $k$ and $M$ by (\dag). Let $F\in\mathcal{P}_f({\mathcal T})_{\leq k}$.
Pick $t\in\nhat{r}$ such that for all $f\in F$, $f(t)\in\sigma(M)$.
Let $P=\{x*a*f(t):x\in L$ and $f\in F\}$. Let $m=1$, $a(1)=a$, and pick $a(2)\in \sigma(P)$.
Let $t(1)=t$.
Then for $f\in F$, $a(1)*f(t(1))*a(2)\in \sigma(L)$.
\end{proof}

\begin{lemma} Let $X$ be an infinite set. The partial semigroup $(\mathcal{P}_f(X),\ustar)$ 
has property (\dag).
\end{lemma}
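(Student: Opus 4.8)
The plan is to reduce property (\dag) for $(\mathcal{P}_f(X),\ustar)$ to a one-line pigeonhole count, after unwinding two definitions. First I would compute $\sigma(L)$. Since $a\ustar b$ is defined exactly when $a\cap b=\emp$, we have $\vvarphi(a)=\{b\in S:a\cap b=\emp\}$; hence, writing $E=\bigcup L$ for a given $L\in\mathcal{P}_f(S)$ (so $E$ is a finite subset of $X$), $\sigma(L)=\{b\in S:b\cap E=\emp\}$. Thus every clause ``$f(t)\in\sigma(L)$'' appearing in (\dag) just says $f(t)\cap E=\emp$.

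Second I would isolate the only fact needed about adequate sequences in this partial semigroup: if $\langle f(t)\rangle_{t=1}^\infty\in\mathcal{T}$, then the sets $f(t)$, $t\in\ben$, are pairwise disjoint. This is immediate from clause (i) of Definition \ref{defadseq} applied to $H=\{s,t\}$ with $s<t$: for $\prod_{u\in H}f(u)$ to be defined one needs $f(s)\cap f(t)=\emp$. (Conversely, pairwise disjointness also makes clause (ii) automatic, since each point of a prescribed finite subset of $X$ lies in at most one $f(t)$; so the adequate sequences here are precisely the sequences of pairwise disjoint members of $\mathcal{P}_f(X)$. Only the forward implication is used.)

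Finally, the count. Given $k\in\ben$ and $L\in\mathcal{P}_f(S)$, put $E=\bigcup L$, $n=|E|$, and take $r=kn+1$. Let $F\in\mathcal{P}_f(\mathcal{T})_{\leq k}$. For each $f\in F$, the map sending a value $t$ with $f(t)\cap E\neq\emp$ to some chosen point of $f(t)\cap E$ is injective (the $f(t)$ being pairwise disjoint), so $\{t\in\ben:f(t)\cap E\neq\emp\}$ has at most $n$ elements; as $|F|\leq k$, the set $B=\{t\in\nhat{r}:(\exists f\in F)\,f(t)\cap E\neq\emp\}$ has at most $kn<r$ elements. Choosing $t\in\nhat{r}\setminus B$ gives $f(t)\in\sigma(L)$ for every $f\in F$, which is exactly what (\dag) requires. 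I do not expect a genuine obstacle here; the only point needing a moment's care is confirming that clause (ii) in the definition of an adequate sequence hides no extra constraint relevant to the count — and, as observed above, in this partial semigroup it is automatically fulfilled, so nothing is lost.
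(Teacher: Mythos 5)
Your proposal is correct and follows essentially the same argument as the paper: the same choice $r=kd+1$ and the same pigeonhole count using the pairwise disjointness of the terms of an adequate sequence. You merely make explicit two points the paper leaves implicit (the description of $\sigma(L)$ and why clause (i) of the definition of adequate sequence forces pairwise disjointness), which is fine.
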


\begin{proof} Let $S=\mathcal{P}_f(X)$. Let $k\in\ben$ and $L\in\mathcal{P}_f(S)$ be given. 
Let $d=|\bigcup L|$ and let $r=kd+1$. Let $F\in\mathcal{P}_f(\mathcal{T})_{\leq k}$ be given
and enumerate $F$ as $\{f_1,f_2,\ldots,f_k\}$, with repetition if necessary.
For $j\in\nhat{k}$, let $B_j=\{t\in\nhat{r}:f_j(t)\cap(\bigcup L)\neq\emp\}$.
Since $f_j(1),f_j(2),\ldots,f_j(r)$ are pairwise disjoint, we get 
$|B_j|\leq d$. Consequently $|\bigcup_{j=1}^kB_j|\leq kd$ so pick $t\in\nhat{r}\setminus
\bigcup_{j=1}^kB_j$. Then for all $j\in\nhat{k}$, $f_j(t)\cap(\bigcup L)=\emp$
so $f_j(t)\in\sigma(L)$.
\end{proof}

Note in particular that $(\mathcal{P}_f(\beq^+),\ustar)$ has property (\dag), so is a CR set
while we saw in Theorem \ref{Snotcr} that $(\mathcal{P}_f(\beq^+),\uplus)$ is not a 1-CR set.

Following is a more simply stated sufficient condition.

\begin{definition} Let $(S,*)$ be an adequate partial semigroup. Then $(S,*)$ has property
(\ddag) if and only if $(\forall L\in\mathcal{P}_f(S))(\exists r\in\ben)
(\forall f\in{\mathcal T})(f(r)\in\sigma(L))$.\end{definition}

Since (\ddag) implies (\dag), it is also a sufficient condition for $S$ to be
a CR set. 
Note that both $(\mathcal{P}_f(\ben),\uplus)$ and $(L(\Sigma),*)$ have property (\ddag).

\begin{definition}  Let $(S,*)$ be an adequate partial semigroup.
A subset $A$ of $S$ is {\it {\v c}-piecewise syndetic\/} in $S$ if and only if
$(\exists H\in\mathcal{P}_f(S))(\forall T\in\mathcal{P}_f(S))\\
(\exists x\in\sigma(T))
((T\cap\sigma(H))*x\subseteq\bigcup_{s\in H}s^{-1}A)$.
\end{definition}

We have seen that piecewise syndetic need not imply CR.

\begin{theorem}\label{psimplcr} Let $(S,*)$ be an adequate partial semigroup
which is a $1$-CR set in $S$ and let $A$ be a piecewise syndetic subset of
$S$. Then A is a $1$-CR set in $S$.\end{theorem}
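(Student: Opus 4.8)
The plan is to mimic the classical semigroup argument showing piecewise syndetic sets are CR, while tracking the extra bookkeeping forced by the partial operation and by property (\dag)-type hypotheses being available only at level $1$. Since $A$ is piecewise syndetic in $S$, by Definition \ref{defparalg}(4) we have $\overline A\cap K(\delta S)\neq\emp$; equivalently, by the McLeod-style combinatorial characterization, there is $G\in\mathcal{P}_f(S)$ such that for every $F\in\mathcal{P}_f(S)$ there is $x\in\sigma(F)$ with $F\cdot x\subseteq\bigcup_{s\in G}s^{-1}A$ (here $s^{-1}A=\{y\in\vvarphi(s):s*y\in A\}$). First I would fix the witnessing $G$ for $A$. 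Given $L\in\mathcal{P}_f(S)$ (the target is to land in $A\cap\sigma(L)$), I would apply the hypothesis that $S$ is a $1$-CR set to the finite set $L\cup G$ (enlarged so that the translates $s^{-1}A$ make sense and so the final product lands in $\sigma(L)$), obtaining an $r\in\ben$ that works for $k=1$ and for this enlarged finite set. I claim this same $r$ witnesses that $A$ is $1$-CR for $L$.

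Next, given $F=\{f\}\in\mathcal{P}_f(\mathcal{T})_{\leq 1}$, I would run the $1$-CR property of $S$ to get $m\in\ben$, $b\in S^{m+1}$, and $t(1)<\ldots<t(m)\leq r$ with $y:=b(1)*f(t(1))*\cdots*b(m)*f(t(m))*b(m+1)\in S\cap\sigma(L\cup G)$. Now $y\in S$, so I can feed the singleton $\{y\}$ (or rather a finite set built from $G*y$) into the piecewise-syndetic witness for $A$: choose $x\in\sigma(\text{appropriate set})$ so that $y*x\in\bigcup_{s\in G}s^{-1}A$, i.e. there is $s\in G$ with $s*y*x\in A$. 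The point is that the word $s*b(1)*f(t(1))*\cdots*b(m)*f(t(m))*b(m+1)*x$ is again of the required CR form, with the same spine indices $t(1)<\ldots<t(m)\leq r$, new first coefficient $s*b(1)$ and new last coefficient $b(m+1)*x$, and it lies in $A$. One also needs this product to lie in $\sigma(L)$: this is arranged by having chosen $y\in\sigma(L\cup G)$ from the start and choosing $x\in\sigma(\{s*y:s\in G\}\cup L\cup\ldots)$, so that all the requisite products are defined and the final result is in $\sigma(L)$ (using the associativity/defined-ness axiom of a partial semigroup repeatedly).

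The main obstacle I expect is the defined-ness bookkeeping: in a partial semigroup one cannot freely concatenate, so every time a new coefficient $s*b(1)$ on the left or $b(m+1)*x$ on the right is introduced, one must verify that the resulting long product is actually defined and still sits in $\sigma(L)$. The clean way to handle this is to be careful about the order of quantifiers: first get $y\in\sigma(M)$ for a large enough $M\in\mathcal{P}_f(S)$ containing $L$, $G$, and enough auxiliary elements; then, when invoking piecewise syndeticity of $A$, choose the finite set $T$ passed to it large enough (containing $L$, the relevant partial products, etc.) so that the returned $x$ lies in $\sigma(T)$, which forces all remaining products defined and the terminal element into $A\cap\sigma(L)$. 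A secondary subtlety is that the McLeod combinatorial form of piecewise syndetic is not literally stated in the excerpt for partial semigroups; if one prefers to avoid it, one can instead work directly with an ultrafilter $p\in\overline A\cap K(\delta S)$ and use that $A\in p$ together with the fact that $K(\delta S)$ absorbs multiplication, extracting the needed $s\in G$ and $x$ from membership relations in $p$ — but the transl-by-finite-set formulation keeps the combinatorics transparent and parallels the semigroup proof in \cite[Theorem 3.3]{HHST} most directly. Everything else is a routine check that the exhibited word has the shape demanded by Definition \ref{defkcr} with the same bound $r$.
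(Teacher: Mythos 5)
Your overall architecture matches the paper's proof: invoke a McLeod-type combinatorial consequence of piecewise syndeticity, use the $1$-CR property of $S$ to place the spine product $y=b(1)*f(t(1))*\cdots*b(m+1)$ into the right $\sigma$-set, then absorb a left translator $s$ into $b(1)$ and a right translator $x$ into $b(m+1)$. But there is a genuine gap in how you propose to get the final element into $\sigma(L)$. You fix the piecewise syndeticity witness $G$ for $A$ once and for all, and then try to force $s*y*x\in\sigma(L)$ by choosing $y\in\sigma(L\cup G)$ and $x\in\sigma(\cdots\cup L)$. That does not work: for $\ell\in L$, the product $\ell*(s*y*x)$ is defined only if $\ell*s$ is defined (by the associativity axiom, $\ell*(s*(y*x))$ defined forces $(\ell*s)*(y*x)$, hence $\ell*s$, defined), so you would need $s\in\sigma(L)$ --- and worse, $y\in\sigma(L*s)$, $x\in\sigma(L*s*y)$. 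The element $s\in G$ is handed to you by the piecewise syndeticity statement (you only know $y*x\in\bigcup_{s\in G}s^{-1}A$ for \emph{some} $s$), $G$ was chosen before $L$, and nothing you control about $y$ or $x$ can compensate for $\ell*s$ being undefined. So the exhibited word need not lie in $\sigma(L)$, and the $1$-CR condition for $A$ is not verified.

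The missing idea, which is how the paper closes this hole, is to replace $A$ by $A\cap\sigma(L)$ \emph{before} invoking piecewise syndeticity: since $\delta S\subseteq\overline{\sigma(L)}$, any $p\in\overline A\cap K(\delta S)$ also lies in $\overline{A\cap\sigma(L)}\cap K(\delta S)$, so $A\cap\sigma(L)$ is itself piecewise syndetic, hence (by \cite[Theorem 3.10]{M}) {\v c}-piecewise syndetic with some witness $H$ depending on $L$. Applying the $1$-CR property of $S$ to $H$ puts $y\in\sigma(H)$, and the {\v c}-PS conclusion then delivers $s\in H$ and $x$ with $s*y*x\in A\cap\sigma(L)$ directly --- no post hoc definedness bookkeeping is needed. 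With that substitution your argument goes through; your remark that the ultrafilter $p$ could be used in place of the combinatorial characterization is also essentially how the paper justifies the first step.
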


\begin{proof} To see that $A$ is a 1-CR set in $S$, let $L\in\mathcal{P}_f(S)$. We need to show that
$(\exists r\in\ben)(\forall f\in{\mathcal T})(\exists m\in\ben)
(\exists a\in S^{m+1})(\exists t(1)<\ldots <t(m)\leq r\hbox{ in }\ben)\\
(a(1)*f(t(1))*a(2)*\ldots * a(m)*f(t(m))*a(m+1)\in A\cap\sigma(L))$.

We claim that $A\cap\sigma(L)$ is piecewise syndetic in $S$. To see this, 
pick $p\in \overline A\cap K(\delta S)$. Since $\delta S\subseteq \overline{\sigma(L)}$
we have that $p\in \overline{A\cap\sigma(L)}\cap K(\delta S)$.

By \cite[Theorem 3.10]{M}, $A\cap\sigma(L)$ is {\v c}-piecewise syndetic so pick
$H\in\mathcal{P}_f(S)$ such that $(\forall T\in\mathcal{P}_f(S))(\exists x\in\sigma(T))
((T\cap\sigma(H))*x\subseteq\bigcup_{s\in H}s^{-1}(A\cap\sigma(L)))$.

Since $S$ is 1-CR in $S$ and $H\in\mathcal{P}_f(S)$, pick $r\in\ben$ such that\\
$(\forall f\in{\mathcal T})(\exists m\in\ben)
(\exists a\in S^{m+1})(\exists t(1)<\ldots <t(m)\leq r\hbox{ in }\ben)\\
(a(1)*f(t(1))*a(2)*\ldots* a(m)*f(t(m))*a(m+1)\in \sigma(H))$.

We claim that $r$ is as required to show that $A$ is a 1-CR set. So let 
$f\in{\mathcal T}$ be given. Pick $m\in\ben$, $a\in S^{m+1}$, and
$t(1)<\ldots<t(m)\leq r$ in $\ben$ such that 
$a(1)*f(t(1))*a(2)*\ldots* a(m)*f(t(m))*a(m+1)\in \sigma(H))$.

Let $T=\{a(1)*f(t(1))*a(2)*\ldots * a(m)*f(t(m))*a(m+1)\}$.
Then $T\subseteq\sigma(H)$ so pick $x\in\sigma(T)$ such that $T*x\subseteq
\bigcup_{s\in H}s^{-1}(A\cap \sigma(L))$. Pick $s\in H$ such that
$s*T*x\subseteq A\cap\sigma(L)$. That is\\ 
$s*a(1)*f(t(1))*a(2)*\ldots * a(m)*f(t(m))*a(m+1)*x\in A\cap\sigma(L)$.
Define $b\in S^{m+1}$ by $b(1)=s*a(1)$, $b(m+1)=a(m+1)*x$ and for $1<j\leq m$, if any, 
$b(j)=a(j)$. Then
$b(1)*f(t(1))*b(2)*\ldots * b(m)*f(t(m))*b(m+1)\in A\cap \sigma(L)$.
\end{proof}

Note that the above proof cannot be simply adapted to prove that if
$S$ is a CR set in $S$ then every piecewise syndetic set in $S$ is
a 2-CR set in $S$ since if $F=\{f,g\}\subseteq {\mathcal T}$, then one
may not be able to pick the same $s\in H$ for both $f$ and $g$.

\begin{definition} Let $(S,*)$ be an adequate partial semigroup.
\begin{itemize}
\item[(1)] $CR(S)=\{p\in\delta S:(\forall A\in p)(A$ is a CR set$)\}$.
\item[(2)] For $k\in\ben$, $k$-$CR(S)=\{p\in\delta S:(\forall A\in p)(A$ is a $k$-CR set$)\}$.
\end{itemize}
\end{definition}

\begin{lemma}\label{crbeta} Let $(S,*)$ be an adequate partial semigroup.
Then for each $k\in\ben$, $k$-$CR(S)=\{p\in\beta S:(\forall A\in p)(A$ is a $k$-CR set$)\}$
and $CR(S)=\{p\in\beta S:(\forall A\in p)(A$ is a CR set$)\}$.
\end{lemma}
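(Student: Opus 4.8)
The plan is to prove the two equalities of the lemma in the order: first fix $k\in\ben$ and establish the statement for $k$-$CR(S)$, and then deduce the statement for $CR(S)$ formally. For a fixed $k$, the inclusion $k$-$CR(S)\subseteq\{p\in\beta S:(\forall A\in p)(A\text{ is a }k\text{-CR set})\}$ is immediate from $\delta S\subseteq\beta S$, so all the content is in the reverse inclusion. I would therefore begin with an arbitrary $p\in\beta S$ such that every $A\in p$ is a $k$-CR set and try to show $p\in\delta S$; once that is known, $p$ automatically lies in $k$-$CR(S)$ by the definition of the latter.

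The key reduction is that $p\in\delta S$ if and only if $\sigma(L)\in p$ for every $L\in\mathcal{P}_f(S)$, since $\delta S=\bigcap_{L\in\mathcal{P}_f(S)}\overline{\sigma(L)}$ and, for an ultrafilter $p$, one has $p\in\overline{\sigma(L)}$ exactly when $\sigma(L)\in p$. So I would fix $L\in\mathcal{P}_f(S)$ and argue by contradiction: if $\sigma(L)\notin p$, then $A:=S\setminus\sigma(L)\in p$, so $A$ is a $k$-CR set, and applying Definition \ref{defkcr} to $A$ with this particular $L$ produces some $r\in\ben$. Feeding in the singleton $F=\{g\}$ for any $g\in\mathcal{T}$ (a legitimate member of $\mathcal{P}_f(\mathcal{T})_{\leq k}$) we then obtain $m\in\ben$, $a\in S^{m+1}$ and $t(1)<\ldots<t(m)\leq r$ with
$$a(1)*g(t(1))*a(2)*\cdots*a(m)*g(t(m))*a(m+1)\in A\cap\sigma(L)\,.$$
But $A\cap\sigma(L)=(S\setminus\sigma(L))\cap\sigma(L)=\emp$, which is absurd. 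Hence $\sigma(L)\in p$ for every $L\in\mathcal{P}_f(S)$, so $p\in\delta S$, and the $k$-$CR$ equality follows.

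For the $CR(S)$ statement I would use that a set is a CR set precisely when it is a $k$-CR set for every $k\in\ben$ (as noted right after Definition \ref{defkcr}). Consequently $CR(S)=\bigcap_{k\in\ben}k$-$CR(S)$, and likewise $\{p\in\beta S:(\forall A\in p)(A\text{ is a CR set})\}=\bigcap_{k\in\ben}\{p\in\beta S:(\forall A\in p)(A\text{ is a }k\text{-CR set})\}$; intersecting over $k$ the equalities already proved gives the desired identity.

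I do not expect a serious obstacle: the argument is essentially the identification of $p\in\overline{C}$ with $C\in p$ for $C\subseteq S$, together with a single application of the $k$-CR definition to $C=S\setminus\sigma(L)$, followed by the passage from $k$-CR to CR. The one point that needs care is that $\mathcal{P}_f(\mathcal{T})_{\leq k}\neq\emp$, i.e. that $S$ admits at least one adequate sequence, since that is what lets us choose the witness $F=\{g\}$ on which the contradiction rests; this is available, for instance, whenever $S$ is countable (cf. the discussion following Definition \ref{defadseq}).
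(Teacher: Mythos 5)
Your proposal is correct and is essentially the paper's own argument: the paper also reduces everything to showing $\sigma(L)\in p$ for each $L\in\mathcal{P}_f(S)$ by observing that every $A\in p$, being a $k$-CR set, must meet $\sigma(L)$ (your contradiction with $A=S\setminus\sigma(L)$ is just the contrapositive phrasing of this), and it likewise treats the $CR(S)$ case as an immediate consequence. Your closing remark that the argument needs $\mathcal{T}\neq\emp$ to produce a witness $F=\{g\}$ is a fair point of care that the paper's one-line proof passes over silently.
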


\begin{proof} Let $k\in\ben$ and let $p\in \beta S$ such that for all  $A\in p$, $A$ is a $k$-CR set. To see that $p\in\delta S$, let
$L\in\mathcal{P}_f(S)$. Then for each $A\in p$, $A\cap\sigma(L)\neq\emp$ so $\sigma(L)\in p$.
\end{proof}

\begin{theorem}\label{CRideal} Let $(S,*)$ be an adequate partial semigroup. For each
$k\in\ben$, if $k$-$CR(S)\neq\emp$, then $k$-$CR(S)$ is a compact two 
sided ideal of $\delta S$. Consequently, if $CR(S)\neq\emp$, then
$CR(S)$ is a compact two sided ideal of $\delta S$.
\end{theorem}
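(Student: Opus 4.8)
The statement to prove is Theorem \ref{CRideal}: for each $k\in\ben$, if $k$-$CR(S)\neq\emp$ then $k$-$CR(S)$ is a compact two-sided ideal of $\delta S$, and consequently if $CR(S)\neq\emp$ then $CR(S)$ is a compact two-sided ideal. I would follow the standard template used for analogous results in $\beta S$ (e.g.\ the proof that $J(S)$ is an ideal, cited as \cite[Corollary 3.4]{HP}): establish closedness (hence compactness, since $\delta S$ is compact Hausdorff by Theorem \ref{thmdelta}), then establish that $k$-$CR(S)$ absorbs multiplication on both sides. Since $CR(S)=\bigcap_{k\in\ben}k\text{-}CR(S)$, the final sentence follows once each $k$-$CR(S)$ is handled, because an intersection of compact two-sided ideals that is nonempty is again a compact two-sided ideal.

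\textbf{Closedness.} I would show the complement is open in $\beta S$ (using Lemma \ref{crbeta}, which identifies $k$-$CR(S)$ with the set of $p\in\beta S$ all of whose members are $k$-CR sets). If $p\notin k\text{-}CR(S)$, there is $A\in p$ that is not a $k$-CR set; every $q\in\overline A$ then also contains a non-$k$-CR set, so $\overline A$ is a neighborhood of $p$ missing $k\text{-}CR(S)$. This requires knowing that $k$-CR is a property of sets (not of ultrafilters), which is immediate from Definition \ref{defkcr}; the one mild point to check is that if $A$ is not $k$-CR then neither is any subset of $A$, which is clear from the definition since shrinking $A$ only makes the requirement harder. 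So $k$-$CR(S)$ is closed, hence compact.

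\textbf{Two-sided ideal.} Fix $p\in k\text{-}CR(S)$ and $q\in\delta S$; I must show $q*p\in k\text{-}CR(S)$ and $p*q\in k\text{-}CR(S)$. For each, the key lemma is the "preimage" fact: $A$ is a $k$-CR set, $x\in S$, and (for the left case) $x^{-1}A$ makes sense, then $x^{-1}A$ is again $k$-CR, and (for the right case) $A*x^{-1}$-type preimages are $k$-CR. Concretely, given $B\in q*p$, one has $\{x\in S: x^{-1}B\in p\}\in q$, and for each such $x$, $x^{-1}B\in p$ is $k$-CR; one then checks directly from Definition \ref{defkcr} that if $x^{-1}B$ is $k$-CR then $B$ is $k$-CR — given $L\in\mathcal{P}_f(S)$, apply $k$-CR of $x^{-1}B$ to the set $L\cup\{x\}$ (so that the output also lies in $\varphi(x)$), obtaining words landing in $x^{-1}B\cap\sigma(L\cup\{x\})$, and then prepend $x$ to $a(1)$ to get words in $B\cap\sigma(L)$; the bookkeeping that $\sigma(L\cup\{x\})\subseteq\varphi(x)\cap\sigma(L)$ and that $x*(\text{word})\in B$ is routine. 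Since $B$ itself was arbitrary in $q*p$, this shows $q*p\in k\text{-}CR(S)$. The right-absorption argument is the mirror image, using that $\{x: B*x^{-1}\in p\}\in q$ where $B*x^{-1}=\{y:y*x\in B\}$, and appending $x$ to $a(m+1)$; here one also needs that the relevant products remain defined, which is handled exactly as it is in the partial-semigroup J-set arguments of \cite{HP} by enlarging $L$.

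\textbf{Main obstacle.} The real work, and the only place where the partial-semigroup setting genuinely complicates the $\beta S$ proof, is tracking the $\sigma(L)$ side conditions through the multiplication: when I prepend $x$ to $a(1)$ I need $x*a(1)*f(t(1))*\cdots$ to be defined and to land in $\sigma(L)$, which forces me to have run the $k$-CR hypothesis on an enlarged finite set (something like $L\cup\{x\}\cup(\text{terms built from }x)$) rather than on $L$ itself, and one must verify that such enlargement is harmless — i.e.\ that $k$-CR for $L'$ supplying words in $\sigma(L')$ with $L'\supseteq$ the needed set does yield membership in $B\cap\sigma(L)$. This is the same maneuver that makes the proof of \cite[Corollary 3.4]{HP} go through, so I expect no essential difficulty, only careful bookkeeping; the $r$-bound in the $k$-CR definition is preserved automatically since prepending/appending to the $a$-vector does not change the indices $t(1)<\cdots<t(m)\le r$. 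Finally, for the consequence about $CR(S)$: $CR(S)=\bigcap_k k\text{-}CR(S)$ by the remark following Definition \ref{defkcr}, each factor is a compact two-sided ideal when nonempty, and $CR(S)$ is assumed nonempty, so it is closed (intersection of closed sets), hence compact, and absorbs multiplication on both sides (intersection of two-sided ideals), completing the proof.
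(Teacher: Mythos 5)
Your treatment of compactness, of the reduction of $CR(S)$ to $\bigcap_{k}k$-$CR(S)$, and of left absorption ($q*p\in k$-$CR(S)$) is essentially the paper's argument: there too one picks $x\in\{y:y^{-1}A\in p\}\cap\sigma(L)$, applies the $k$-CR property of $x^{-1}A$ to an enlarged finite set, and prepends $x$ to $a(1)$. But your right-absorption step rests on a false claim. You assert that $B\in p*q$ gives $\{x\in S:B*x^{-1}\in p\}\in q$ and propose to append such an $x$ to $a(m+1)$. With the convention used throughout the paper, $B\in p*q$ means $\{x\in S:x^{-1}B\in q\}\in p$; the statement $\{x:Bx^{-1}\in p\}\in q$ is instead membership of $B$ in the value at $q$ of the \emph{continuous extension} of $\lambda_p|_S$, which is not $p*q$ in general (in $(\beta\ben,+)$ your claim would say $B\in p+q\Rightarrow B\in q+p$, which fails). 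The correct argument is not a mirror image of the left case: as in the paper, one takes $B=\{x:x^{-1}A\in q\}\in p$, which is a $k$-CR set, runs the $k$-CR property of $B$ for $L$ to produce the \emph{finite} set $T$ of words lying in $B\cap\sigma(L)$, and only then uses that $\bigcap_{w\in T}w^{-1}A\in q$ to choose a single $y$ that can be appended to $a(m+1)$ for all $f\in F$ simultaneously. The finite-intersection step is what replaces your (invalid) pointwise choice of $x$ from $q$.

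A secondary, smaller issue: in the left-absorption case you propose enlarging $L$ to $L\cup\{x\}$. That guarantees only that the produced word $w$ satisfies ``$\ell*w$ defined'' and ``$x*w$ defined,'' which does not yield ``$\ell*(x*w)$ defined,'' i.e.\ $x*w\in\sigma(L)$; associativity requires $w\in\varphi(\ell*x)$, not $w\in\varphi(\ell)\cap\varphi(x)$. The enlargement that works is $L*x$ (after first choosing $x\in\sigma(L)$ so that $L*x$ is defined), which is what the paper does. You gesture at this in your ``main obstacle'' paragraph (``terms built from $x$''), so this is an imprecision rather than a wrong idea, but as literally stated the set $L\cup\{x\}$ is insufficient.
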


\begin{proof} Let $k\in\ben$, assume that $k$-$CR(S)\neq\emp$, let 
$p\in k$-$CR(S)$, and let $q\in\delta S$. We will show that $q*p\in k$-$CR(S)$
and $p*q\in k$-$CR(S)$.

Let $A\in p*q$. We need to show that $A$ is a $k$-CR set. So let 
$L\in\mathcal{P}_f(S)$. Let $B=\{x\in S:x^{-1}A\in q\}$. Then $B\in p$ so
pick $r\in\ben$ such that\\
$(\forall F\in\mathcal{P}_f(\mathcal{T})_{\leq k})(\exists m\in\ben)
(\exists a\in S^{m+1})(\exists t(1)<\ldots <t(m)\leq r\hbox{ in }\ben)
(\forall f\in F)\\
(a(1)*f(t(1))*a(2)*\ldots* a(m)*f(t(m))*a(m+1)\in B\cap\sigma(L))$.

To see that $r$ is as required for $A$, let $F\in\mathcal{P}_f(\mathcal{T})_{\leq k}$.
Pick $m\in\ben$, $a\in S^{m+1}$, and $t(1)<\ldots<t(m)\leq r$ in $\ben$ such that
$(\forall f\in F)\\
(a(1)*f(t(1))*a(2)*\ldots* a(m)*f(t(m))*a(m+1)\in B\cap\sigma(L))$.

Let $T=\{a(1)*f(t(1))*a(2)*\ldots* a(m)*f(t(m))*a(m+1):f\in F\}$.
Then $T\subseteq B\cap\sigma(L)$. So $\bigcap_{x\in T}x^{-1}A\in q$. Pick
$y\in \bigcap_{x\in T}x^{-1}A$. Then $y\in\sigma(T)$ and for each 
$f\in F$, $a(1)*f(t(1))*a(2)*\ldots* a(m)*f(t(m))*a(m+1)*y\in A$.
Since $T\subseteq \sigma(L)$, for each $f\in F$, 
$a(1)*f(t(1))*a(2)*\ldots* a(m)*f(t(m))*a(m+1)*y\in \sigma(L)$.
Defiine $b\in S^{m+1}$ by $b(j)=a(j)$ if $j\leq m$ and $b(m+1)=a(m+1)*y$.
Then for each $f\in F$, 
$b(1)*f(t(1))*b(2)*\ldots* b(m)*f(t(m))*b(m+1)\in A\cap\sigma(L)$,
as required.

To see that $q*p\in k$-$CR(S)$, let $A\in q*p$. We need to show that $A$ is a $k$-CR set. So let 
$L\in\mathcal{P}_f(S)$. Let $B=\{x\in S:x^{-1}A\in p\}$. Then $B\in q$ and $\sigma(L)\in q$
so pick $x\in B\cap \sigma(L)$. Then $x^{-1}A\in p$ and $L*x\in \mathcal{P}_f(S)$ so pick $r\in\ben$ such that 
$(\forall F\in\mathcal{P}_f(\mathcal{T})_{\leq k})(\exists m\in\ben)
(\exists a\in S^{m+1})(\exists t(1)<\ldots <t(m)\leq r\hbox{ in }\ben)\\
(\forall f\in F)
(a(1)*f(t(1))*a(2)*\ldots* a(m)*f(t(m))*a(m+1)\in x^{-1} A)$.

To see that $r$ is as required for $A$, let $F\in\mathcal{P}_f(\mathcal{T})_{\leq k}$.
Pick $m\in\ben$, $a\in S^{m+1}$, and $t(1)<\ldots<t(m)\leq r$ in $\ben$ such that
$(\forall f\in F)\\
(a(1)*f(t(1))*a(2)*\ldots* a(m)*f(t(m))*a(m+1)\in x^{-1}A\cap \sigma(L*x))$.
Then for each $f\in F$, $a(1)*f(t(1))*a(2)*\ldots* a(m)*f(t(m))*a(m+1)\in \sigma(L*x)$
so $x*a(1)*f(t(1))*a(2)*\ldots* a(m)*f(t(m))*a(m+1)\in A\cap \sigma(L)$.
Define $b\in S^{m+1}$ by $b(1)=x*a(1)$ and for $j>1$, $b(j)=a(j)$. As before, this completes the proof.
\end{proof} 

\begin{corollary}\label{1crideal} Let $(S,\ast)$ be an adequate partial semigroup which
is a $1$-CR set in $S$. Then $1$-$CR(S)$ is an ideal of $\delta S$.
\end{corollary}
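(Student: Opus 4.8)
The plan is to derive this immediately from Theorem~\ref{CRideal} together with Theorem~\ref{psimplcr}. By Theorem~\ref{CRideal} applied with $k=1$, as soon as we know that $1$-$CR(S)\neq\emp$ we automatically get that $1$-$CR(S)$ is a (compact two sided) ideal of $\delta S$. So the entire task reduces to exhibiting a single ultrafilter $p\in\delta S$ every member of which is a $1$-CR set.

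To produce such a $p$, I would call on the basic structure theory: by Theorem~\ref{thmdelta}, $(\delta S,*)$ is a compact Hausdorff right topological semigroup, so it has a smallest two sided ideal $K(\delta S)$, which in particular is nonempty. Pick any $p\in K(\delta S)$. The key observation is that every $A\in p$ is piecewise syndetic in $S$: if $A\in p$ then $p\in\overline A$, and since also $p\in K(\delta S)$ we get $\overline A\cap K(\delta S)\neq\emp$, which is exactly the definition of piecewise syndetic in Definition~\ref{defparalg}(4). Now the hypothesis that $S$ is a $1$-CR set lets us apply Theorem~\ref{psimplcr} to conclude that each such $A$ is a $1$-CR set. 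Hence every member of $p$ is a $1$-CR set, i.e.\ $p\in 1$-$CR(S)$, and therefore $1$-$CR(S)\neq\emp$.

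Finally, invoking Theorem~\ref{CRideal} once more, $1$-$CR(S)$ is a compact two sided ideal of $\delta S$; in particular it is an ideal of $\delta S$, which is the assertion.

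There is essentially no real obstacle in this argument — it is a short corollary. The only minor points to keep in mind are that the chosen $p$ must genuinely lie in $\delta S$ rather than merely in $\beta S$ for the membership $p\in 1$-$CR(S)$ to be meaningful (this is automatic, since $K(\delta S)\subseteq\delta S$, and in any case Lemma~\ref{crbeta} shows it is harmless), and that one should check $K(\delta S)\neq\emp$, which is guaranteed by Theorem~\ref{thmdelta}. No circularity arises, since both Theorem~\ref{CRideal} and Theorem~\ref{psimplcr} are established before this corollary.
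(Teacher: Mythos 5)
Your proposal is correct and is exactly the paper's intended argument: the paper's proof simply cites Theorems~\ref{psimplcr} and~\ref{CRideal}, and your write-up supplies the implicit connecting step (any $p\in K(\delta S)$ consists of piecewise syndetic sets, hence of $1$-CR sets by Theorem~\ref{psimplcr}, so $1$-$CR(S)\neq\emp$ and Theorem~\ref{CRideal} applies).
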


\begin{proof} Theorems \ref{psimplcr} and \ref{CRideal}. \end{proof}

\begin{theorem}\label{CRequiv} Let $(S,*)$ be an adequate partial semigroup. Statements
(1), (2), and (3) are equivalent and are implied by statement (4).
\begin{itemize}
\item[(1)] $CR(S)\neq\emp$.
\item[(2)] Every piecewise syndetic subset of $S$ is a CR set.
\item[(3)] For every $k\in\ben$, every piecewise syndetic subset of $S$ is a 
$k$-CR set.
\item[(4)] $S$ is a CR set in $S$ and whenever
$A_1$ and $A_2$ are subsets of $S$ and $A_1\cup A_2$ is a CR set, then either
$A_1$ or $A_2$ is a CR set.
\end{itemize}
\end{theorem}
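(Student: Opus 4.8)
The plan is to establish the chain $(1)\Leftrightarrow(2)\Leftrightarrow(3)$ first and then derive $(1)$ from $(4)$, using the ultrafilter machinery of $\delta S$ throughout. The implication $(2)\Rightarrow(3)$ is trivial since a CR set is by definition a $k$-CR set for every $k$. For $(3)\Rightarrow(1)$, I would run a standard compactness-of-$\beta S$ argument: for each $k$ let $E_k=\{p\in\delta S:(\forall A\in p)(A\text{ is a }k\text{-CR set})\}=k\text{-}CR(S)$. By Lemma~\ref{crbeta} these are subsets of $\beta S$ cut out by a closed condition, hence each $\overline{k\text{-}CR(S)}$-type set is closed; more importantly, assuming $(3)$, every piecewise syndetic set is a $k$-CR set, so $K(\delta S)$ is nonempty and any $p\in K(\delta S)$ has the property that all its members, being piecewise syndetic (each $A\in p$ with $p\in K(\delta S)$ is PS), are $k$-CR for every $k$; thus $p\in CR(S)$ and $(1)$ holds. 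Actually the cleanest route is: $K(\delta S)\subseteq CR(S)$ whenever $(3)$ holds, because every member of an ultrafilter in $K(\delta S)$ is piecewise syndetic. Since $K(\delta S)\neq\emp$ by Theorem~\ref{thmdelta} and general theory, $(1)$ follows.

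For $(1)\Rightarrow(2)$, suppose $CR(S)\neq\emp$; by Theorem~\ref{CRideal} it is a two-sided ideal of $\delta S$, hence contains $K(\delta S)$, so $K(\delta S)\subseteq CR(S)$. If $A$ is piecewise syndetic, then $\overline A\cap K(\delta S)\neq\emp$ by Definition~\ref{defparalg}(4), so pick $p\in \overline A\cap K(\delta S)\subseteq CR(S)$; then $A\in p$ forces $A$ to be a CR set. This is the easy direction. The content is really in $(3)\Rightarrow(1)$ and in handling $(4)\Rightarrow(1)$.

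For $(4)\Rightarrow(1)$: assume $S$ is a CR set and that the family of CR sets is ``partition regular'' in the stated two-cell sense. The standard move is to show that the collection $\mathcal{C}$ of all CR sets, together with the fact that $S\in\mathcal{C}$, generates an ultrafilter all of whose members are CR. One uses the Zorn's lemma / ultrafilter lemma argument: build a filter $\mathcal F$ maximal with respect to the property that every member of $\mathcal F$ is a CR set; such $\mathcal F$ exists since $\{S\}$ has this property. Then argue $\mathcal F$ is an ultrafilter: given $A\subseteq S$, one of $A$, $S\setminus A$ must be CR by hypothesis $(4)$ (taking $A_1=A$, $A_2=S\setminus A$, since $A_1\cup A_2=S$ is CR), and by maximality the CR one lies in $\mathcal F$ (one must check that adjoining it keeps all members CR, which needs that the intersection of a CR set with a member of $\mathcal F$ is CR — this requires a downward-closure-type lemma). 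The resulting ultrafilter $p$ lies in $\delta S$ by Lemma~\ref{crbeta}, so $p\in CR(S)$ and $(1)$ holds.

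The main obstacle I anticipate is the step in $(4)\Rightarrow(1)$ requiring that if $A$ is CR and $B\supseteq A$, then $B$ is CR (monotonicity), and more seriously that one can combine partition regularity with monotonicity to run the maximal-filter argument — in particular that the family of CR sets is closed under the operations needed to conclude the maximal such filter is an ultrafilter. Monotonicity of CR is immediate from Definition~\ref{defCRpartial} since enlarging $A$ only makes the membership requirement $a(1)*f(t(1))*\cdots*a(m+1)\in A\cap\sigma(L)$ easier; the subtlety is purely bookkeeping with the Zorn's lemma setup. A cleaner alternative avoiding Zorn is to observe directly that $(4)$ plus monotonicity says $\{A:A\text{ is CR}\}$ is exactly the union of the ultrafilters it contains, and $S\in\mathcal C$ forces that union to be nonempty; I would present whichever is shorter in the final writeup, but I expect the partition-regularity-to-ultrafilter passage to be the one demanding care.
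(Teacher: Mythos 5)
Your handling of the equivalence of (1), (2), and (3) is correct and is essentially the paper's own argument: (1) implies (2) via Theorem~\ref{CRideal} and $K(\delta S)\subseteq CR(S)$; (2) implies (3) is trivial; and (3) implies (1) because every member of an ultrafilter in $K(\delta S)$ is piecewise syndetic, so $K(\delta S)\subseteq CR(S)\neq\emp$.

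The gap is in (4) implies (1). You propose to build a filter $\mathcal F$ maximal with respect to the property that every member of $\mathcal F$ is a CR set and then argue that $\mathcal F$ is an ultrafilter. A filter must be closed under finite intersections, but the family of CR sets is not: in the semigroup $(\ben,+)$ the even and the odd integers are each piecewise syndetic, hence CR, and their intersection is $\emp$, which is not CR. Nothing in hypothesis (4) gives you that $A\cap F$ is CR when $A$ and $F$ are, so when you adjoin the CR cell of $\{A,S\setminus A\}$ to $\mathcal F$ the generated filter may acquire non-CR members, and the maximality argument collapses at exactly the point you flag and then dismiss as ``bookkeeping''. Your proposed alternative --- that the CR sets are ``the union of the ultrafilters they contain'' --- is a restatement of the desired conclusion, not an argument for it.

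The repair is to dualize (this is in substance what the paper's citation of \cite[Theorem 3.11]{HS} encapsulates). Let $\mathcal I$ be the family of subsets of $S$ that are not CR. Monotonicity of CR (which you correctly observe) makes $\mathcal I$ closed under subsets; hypothesis (4) makes it closed under pairwise, hence by induction finite, unions; and $S\notin\mathcal I$ since $S$ is CR. Thus $\mathcal I$ is a proper ideal, its dual $\{S\setminus B:B\in\mathcal I\}$ is a proper filter, and any ultrafilter $p$ extending that filter meets no member of $\mathcal I$, i.e.\ every member of $p$ is CR. Lemma~\ref{crbeta} then places $p$ in $\delta S$, so $CR(S)\neq\emp$. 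With that substitution your outline matches the paper's proof.
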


\begin{proof} To see that (1) implies (2), assume that $CR(S)\neq\emp$. Then
by Theorem \ref{CRideal}, $CR(S)$ is an ideal of $\delta S$, so
$K(\delta S)\subseteq CR(S)$.

That (2) implies (3) is trivial. To see that (3) implies (1), assume that (3) holds.
We claim that $K(\delta S)\subseteq CR(S)$. Let $p\in K(\delta S)$ and let $A\in p$.
Then for each $k\in\ben$, $A$ is a $k$-CR set so $A$ is a CR set.

To see that (4) implies (1), assume that (4) holds. Let ${\mathcal R}=\{A\subseteq S:
A$ is a CR set in $S\}$. Then ${\mathcal R}$ is partition regular and closed under passage
to supersets so by \cite[Theorem 3.11]{HS} pick $p\in\beta S$ such that
$p\subseteq {\mathcal R}$. Then by Lemma \ref{crbeta}, $p\in\delta S$.
\end{proof}

We know from Theorem \ref{Snotcr} that there exists an adequate partial semigroup $S$ for which $S$ is not a
CR set in which case $CR(S)=\emp$. And we have nontrivial sufficient conditions for $S$ to be a CR set in $S$.

We also know that there exist adequate partial semigroups for which every 
piecewise syndetic set is a CR set and for which the notion of CR set is 
partition regular, namely semigroups.  Unfortunately, we don't have any such examples 
that are not in fact semigroups.

\section{Cartesian products}

There is an obvious meaning of the Cartesian product of 
two partial semigroups.

\begin{definition} Let $(S,*)$ and $(T,*)$ be partial semigroups.
Then $(S\times T,*)$ is the Cartesian product of $S$ and $T$ with the
operation $(a,b)*(c,d)=(a*c,b*d)$ defined if and only if $a*c$ is defined
in $S$ and $b*d$ is defined in $T$.
\end{definition}

\begin{lemma} \label{adeqpartial} Let $(S,*)$ and $(T,*)$ be partial semigroups.
\begin{itemize} 
\item[(1)] $S\times T$ is adequate if and only if $S$ and $T$ are adequate.
\item[(2)] If $f$ is an adequate sequence in $S\times T$, then
$\pi_1\circ f$ is an adequate sequence in $S$ and
$\pi_2\circ f$ is an adequate sequence in $T$.
\item[(3)] If $f$ is an adequate sequence in $S$, 
$g$ is an adequate sequence in $T$, and $h:\ben\to S\times T$ is 
defined by $h(n)=(f(n),g(n))$, then $h$ is an adequate sequence in
$S\times T$.
\end{itemize}
\end{lemma}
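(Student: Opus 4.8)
The plan is to verify the three assertions in order, essentially unwinding the definitions, since each reduces to checking that a product being defined in $S\times T$ is equivalent to both coordinate products being defined. First, for part (1), I would observe that for $(a,b)\in S\times T$ we have $\vvarphi\big((a,b)\big)=\vvarphi(a)\times\vvarphi(b)$ directly from the definition of the product on $S\times T$. Consequently, for $F\in\mathcal{P}_f(S\times T)$, writing $F_1=\pi_1[F]$ and $F_2=\pi_2[F]$, one gets $\sigma(F)\supseteq\sigma(F_1)\times\sigma(F_2)$, and in fact equality is not needed: it suffices that $\sigma(F_1)\times\sigma(F_2)\subseteq\sigma(F)$. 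So if $S$ and $T$ are adequate, $\sigma(F_1)\neq\emp$ and $\sigma(F_2)\neq\emp$, hence $\sigma(F)\neq\emp$. Conversely, if $S\times T$ is adequate, then fixing any $b\in T$ and given $G\in\mathcal{P}_f(S)$, apply adequacy to $G\times\{b\}$ to get a point whose first coordinate lies in $\sigma(G)$; symmetrically for $T$. (One uses here that $S,T$ are nonempty, which is part of being a partial semigroup.)

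For part (2), let $f$ be an adequate sequence in $S\times T$ and set $f_1=\pi_1\circ f$. Because the product $(x_1,y_1)*(x_2,y_2)*\cdots*(x_n,y_n)$ in $S\times T$ equals $(x_1*x_2*\cdots*x_n,\,y_1*y_2*\cdots*y_n)$ whenever it is defined, condition (i) of Definition \ref{defadseq} for $f$ immediately gives condition (i) for $f_1$: for each $H\in\mathcal{P}_f(\ben)$, $\prod_{t\in H}f(t)$ is defined, hence so is its first coordinate $\prod_{t\in H}f_1(t)$. For condition (ii), given $F\in\mathcal{P}_f(S)$, apply (ii) for $f$ to $F\times\{b\}$ for some fixed $b\in T$ to obtain $m\in\ben$ with $FP\big(\langle f(t)\rangle_{t=m}^\infty\big)\subseteq\sigma(F\times\{b\})\subseteq\sigma(F)\times\sigma(\{b\})$; projecting to the first coordinate and using that projection of an $FP$-set is the $FP$-set of the projected sequence, we get $FP\big(\langle f_1(t)\rangle_{t=m}^\infty\big)\subseteq\sigma(F)$. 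The argument for $\pi_2\circ f$ is identical.

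For part (3), let $f$ be adequate in $S$, $g$ adequate in $T$, and $h(n)=(f(n),g(n))$. For condition (i): given $H\in\mathcal{P}_f(\ben)$, both $\prod_{t\in H}f(t)$ and $\prod_{t\in H}g(t)$ are defined, so using $\vvarphi\big((a,b)\big)=\vvarphi(a)\times\vvarphi(b)$ and induction on $|H|$, $\prod_{t\in H}h(t)=\big(\prod_{t\in H}f(t),\prod_{t\in H}g(t)\big)$ is defined in $S\times T$. For condition (ii): let $F\in\mathcal{P}_f(S\times T)$ and put $F_1=\pi_1[F]$, $F_2=\pi_2[F]$. Pick $m_1$ with $FP\big(\langle f(t)\rangle_{t=m_1}^\infty\big)\subseteq\sigma(F_1)$ and $m_2$ with $FP\big(\langle g(t)\rangle_{t=m_2}^\infty\big)\subseteq\sigma(F_2)$, and let $m=\max\{m_1,m_2\}$. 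Then any element of $FP\big(\langle h(t)\rangle_{t=m}^\infty\big)$ has first coordinate in $\sigma(F_1)$ and second coordinate in $\sigma(F_2)$, hence lies in $\sigma(F_1)\times\sigma(F_2)\subseteq\sigma(F)$.

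None of this is deep; the only place that requires a moment's care — and the closest thing to an obstacle — is keeping straight the direction of the containment $\sigma(F_1)\times\sigma(F_2)\subseteq\sigma(F)$ (as opposed to equality, which does hold but is not needed) and the bookkeeping that a finite product in $S\times T$ is defined exactly when both coordinate products are defined, so that $FP$-sets, $\sigma$-sets, and $\vvarphi$-sets all behave coordinatewise. Everything else is a routine induction on the length of a product.
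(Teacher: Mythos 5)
Your proof is correct and takes essentially the same approach as the paper: the paper establishes (1) by exactly the two observations you make (the containment $(\bigcap_{a\in\pi_1[F]}\vvarphi(a))\times(\bigcap_{b\in\pi_2[F]}\vvarphi(b))\subseteq\sigma(F)$ for sufficiency, and applying adequacy to $F\times\{a\}$ for necessity), and it dismisses (2) and (3) as routine calculations, which you have simply written out correctly using the coordinatewise behavior of $\vvarphi$, $\sigma$, and $FP$-sets.
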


\begin{proof} (1) For the sufficiency,  let $F\in \mathcal{P}_f(S\times T)$.
Then\\
 $\emp\neq (\bigcap_{a\in\pi_1[F]}\vvarphi(a))\times(
\bigcap_{b\in\pi_2[F]}\vvarphi(b))\subseteq \bigcap_{(a,b)\in F}\vvarphi((a,b))=\sigma(F)$.

For the necessity, let $F\in\mathcal{P}_f(S)$. Pick $a\in T$.
Then $F\times\{a\}\in\mathcal{P}_f(S\times T)$. If $(x,y)\in\sigma(F\times\{a\})$, then $x\in\sigma(F)$.

Routine calculations establish (2) and (3). \end{proof}

The following proof is very similar to the proof of
\cite[Theorem 4.1]{HHST}.

\begin{theorem}\label{AtimesA} Let $(S,*)$ be an adequate partial semigroup,
let $k\in\ben$, and let $A$ be a $2k$-CR set in $S$. Then $A\times A$
is a $k$-CR set in $S\times S$. In particular, if $A$ is a CR set in $S$,
then $A\times A$ is a CR set in $S\times S$.
\end{theorem}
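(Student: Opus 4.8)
The plan is to prove the quantitative statement that a $2k$-CR set $A$ in $S$ gives a $k$-CR set $A\times A$ in $S\times S$, and then obtain the ``in particular'' clause by letting $k$ range over $\ben$ (since $A$ CR means $A$ is $2k$-CR for every $k$, hence $A\times A$ is $k$-CR for every $k$). So fix $k$ and a $2k$-CR set $A\subseteq S$; I must verify Definition \ref{defkcr} for $A\times A$ in $S\times S$. Given $\widetilde L\in\mathcal{P}_f(S\times S)$, set $L=\pi_1[\widetilde L]\cup\pi_2[\widetilde L]\in\mathcal{P}_f(S)$; note $\sigma(L)\times\sigma(L)\subseteq\sigma(\widetilde L)$ by the computation in the proof of Lemma \ref{adeqpartial}(1). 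Apply the $2k$-CR property of $A$ in $S$ to $L$ to get $r\in\ben$; I claim this same $r$ works for $A\times A$ in $S\times S$.

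Now the main idea: given $F\in\mathcal{P}_f(\mathcal{T}_{S\times S})_{\leq k}$, enumerate $F=\{h_1,\dots,h_j\}$ with $j\leq k$, and form the family $F'=\{\pi_1\circ h_i:1\leq i\leq j\}\cup\{\pi_2\circ h_i:1\leq i\leq j\}$. By Lemma \ref{adeqpartial}(2) each of these is an adequate sequence in $S$, and $|F'|\leq 2j\leq 2k$, so $F'\in\mathcal{P}_f(\mathcal{T}_S)_{\leq 2k}$. Apply the witness property of $r$ to $F'$: there are $m\in\ben$, $a\in S^{m+1}$, and $t(1)<\dots<t(m)\leq r$ such that for every $g\in F'$, the product $a(1)*g(t(1))*a(2)*\cdots*a(m)*g(t(m))*a(m+1)\in A\cap\sigma(L)$. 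The point is that the \emph{same} $m$, $t(\cdot)$, and scalars $a(\cdot)$ work simultaneously for all $2j$ sequences $\pi_1\circ h_i$ and $\pi_2\circ h_i$. Set $\widetilde a\in(S\times S)^{m+1}$ by $\widetilde a(\ell)=(a(\ell),a(\ell))$. Then for each $h_i\in F$, using $(x,y)*(x',y')=(x*x',y*y')$ coordinatewise, the product $\widetilde a(1)*h_i(t(1))*\widetilde a(2)*\cdots*\widetilde a(m)*h_i(t(m))*\widetilde a(m+1)$ equals $\bigl(P_i,Q_i\bigr)$ where $P_i=a(1)*(\pi_1\circ h_i)(t(1))*\cdots*a(m+1)$ and $Q_i=a(1)*(\pi_2\circ h_i)(t(1))*\cdots*a(m+1)$; both lie in $A\cap\sigma(L)$, so the pair lies in $(A\times A)\cap(\sigma(L)\times\sigma(L))\subseteq(A\times A)\cap\sigma(\widetilde L)$. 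This is exactly what Definition \ref{defkcr} demands.

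There are two routine technical points to check carefully. First, that the coordinatewise products above are actually \emph{defined} in $S\times S$: this holds precisely because each of $P_i$ and $Q_i$ is defined in $S$ (they are defined since they lie in $A$), and the product of a pair is defined iff both coordinate products are, which is the content of the operation on $S\times S$. Second, that all the relevant $\sigma$-containments line up, in particular $\sigma(L)\times\sigma(L)\subseteq\sigma(\widetilde L)$; this is immediate from $\widetilde L\subseteq\pi_1[\widetilde L]\times\pi_2[\widetilde L]$ together with the product-of-$\varphi$'s computation already used for Lemma \ref{adeqpartial}(1). Neither of these is a real obstacle.

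The only place that needs genuine care — and the reason the factor $2$ is exactly right — is the bookkeeping that both projections of every $h_i$ must be handled by a \emph{single} choice of $m$, $a$, and the increasing tuple $t(1)<\dots<t(m)\leq r$. This is why we fold all $2j$ projected sequences into one family $F'$ before invoking the $2k$-CR hypothesis, rather than applying the hypothesis to the two coordinates separately (which would give two possibly incompatible tuples of indices and scalars). Once $F'$ is formed, the uniformity built into the definition of ($k$-)CR set does all the work, and the argument is parallel to \cite[Theorem 4.1]{HHST} as noted before the statement. I would close by remarking that the passage from the $k$-CR version to the CR version is immediate, since $CR = \bigcap_{k}\,(k\text{-CR})$ both in $S$ and in $S\times S$.
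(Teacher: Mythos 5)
Your argument is correct and is essentially identical to the paper's proof: both form the family of all first and second projections of the given sequences (of size at most $2k$), invoke the $2k$-CR property of $A$ with $\pi_1[\widetilde L]\cup\pi_2[\widetilde L]$ to get a single tuple $m$, $a$, $t(1)<\dots<t(m)\leq r$ working for all projections simultaneously, and then pair up $a$ diagonally. The extra technical points you flag (adequacy of the projected sequences via Lemma \ref{adeqpartial}(2) and the containment $\sigma(L)\times\sigma(L)\subseteq\sigma(\widetilde L)$) are exactly the ones the paper leaves implicit, and you handle them correctly.
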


\begin{proof} Let $L\in\mathcal{P}_f(S\times S)$. Let $M=\pi_1[L]\cup\pi_2[L]$. Pick $r\in\ben$ such
that $(\forall F\in\mathcal{P}_f(\mathcal{T}_S)_{\leq 2k})(\exists m\in\ben)
(\exists a\in S^{m+1})(\exists t(1)<\ldots <t(m)\leq r\hbox{ in }\ben)\\
(\forall f\in F)
(a(1)*f(t(1))*a(2)*\ldots* a(m)*f(t(m))*a(m+1)\in A\cap\sigma(M))$.

Let $F\in\mathcal{P}_f(\mathcal{T}_{S\times S})_{\leq k}$. Let $G=\{\pi_1\circ f:f\in F\}\cup\{\pi_2\circ f:f\in F\}$.
Then $G\in \mathcal{P}_f(\mathcal{T}_S)_{\leq 2k}$.  Pick
$m\in\ben$, $a\in S^{m+1}$, and $t(1)<\ldots <t(m)\leq r$ in $\ben$ such that
for every $h\in G$,\\
 $a(1)*h(t(1))*a(2)*\ldots* a(m)*h(t(m))*a(m+1)\in A\cap\sigma(M)$.

For $i\in\nhat{m+1}$ let $b(i)=(a(i),a(i))$. Then given $f\in F$,\\
$b(1)*f(t(1))*b(2)*\ldots* b(m)*f(t(m))*b(m+1)\in (A\times A)\cap\sigma(L)$.
\end{proof}

We next set out to show that \cite[Proposition 2.3]{BR} holds in the 
partial semigroup $(\mathcal{P}_f(\ben),\uplus)$. It is interesting that even though
it seems not to have been explicitly noted before, $(\mathcal{P}_f(\ben),\uplus)$
is the natural setting for this result rather than the semigroup $(\mathcal{P}_f(\ben),\cup)$.
The reason is, if one is defining an $IP_r$ set in an arbitrary semigroup $(S,\cdot)$,
one says that $A$ is an $IP_r$ set if and only if there exists a sequence $\langle x_t\rangle_{t=1}^r$
in $S$ such that $FP(\langle x_t\rangle_{t=1}^r)\subseteq A$ 
where $FP(\langle x_t\rangle_{t=1}^r)=\{\prod_{t\in H}x_t:H\in\mathcal{P}_f(\nhat{r})\}$
and, if $S$ is not commutative, one specifies that the products are taken in
increasing order of indices.

In the case of $(\mathcal{P}_f(\ben),\cup)$, one standardly adds the condition that\\ 
$FU(\langle X_t\rangle_{t=1}^r)\subseteq A$ where
 $$\textstyle FU(\langle X_t\rangle_{t=1}^r)=\{\bigcup_{t\in H}X_t:H\in\mathcal{P}_f(\nhat{r})\}$$
and if $t<r$, then $\max X_t<\min X_{t+1}$. In the partial semigroup
$(\mathcal{P}_f(\ben),\uplus)$ that is simply the requirement from the 
following natural definition.
 
\begin{definition} Let $(S,*)$ be a partial semigroup and
let $A\subseteq S$. \begin{itemize}
\item[(1)] For $r\in\ben$, $A$ is an $IP_r$ set in $S$ if and only if there
exists a sequence $\langle x_t\rangle_{t=1}^r$
in $S$ such that $FP(\langle x_t\rangle_{t=1}^r)\subseteq A$
where $FP(\langle x_t\rangle_{t=1}^r)=\{\prod_{t\in H}x_t:H\in\mathcal{P}_f(\nhat{r})\}$
and for each $H\in\mathcal{P}_f(\nhat{r})$ the product is taken in increasing order of
indices and is defined.
\item[(2)] For $r\in\ben$, $A$ is an $IP_r^{*}$ set if and only if it has nonempty
intersection with each $IP_r$ set in $S$.
\end{itemize}
\end{definition}

\begin{lemma}\label{FUthm} Let $\mathcal{P}_f(\ben)$ be finitely colored.
There exists a sequence $\langle X_n\rangle_{n=1}^\infty$ in $\mathcal{P}_f(\ben)$
such that for each $n\in\ben$, $\max X_n<\min X_{n+1}$ and
$\{\bigcup_{n\in H}X_n:H\in\mathcal{P}_f(\ben)\}$ is monochromatic.
\end{lemma}

\begin{proof} \cite[Corollary 5.17]{HS}.
\end{proof}

The above lemma can be more succinctly stated in terms of partial semigroups
as {\it Whenever $\mathcal{P}_f(\ben)$ is finitely colored, there is a 
monochromatic IP set in $(\mathcal{P}_f(\ben),\uplus)$.}

The proof of the next lemma is a standard compactness argument. For
$r\in\ben$, let ${\mathcal F}_r=\mathcal{P}_f(\nhat{r})$. Then $({\mathcal F}_r,\uplus)$
is a partial semigroup (but not an adequate partial semigroup).

\begin{lemma}\label{finFUa} Let $s,k\in\ben$. There exists $r\in\ben$ such that
whenever ${\mathcal F}_r$ is $k$-colored, there is a monochromatic
$IP_s$ set in $({\mathcal F}_r,\uplus)$.\end{lemma}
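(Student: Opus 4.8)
The plan is to prove Lemma~\ref{finFUa} by a standard compactness argument, passing from the infinite coloring result in Lemma~\ref{FUthm} to a finite one. The underlying combinatorial principle is this: if every $k$-coloring of the finite partial semigroup $({\mathcal F}_r,\uplus)$, for arbitrarily large $r$, admitted no monochromatic $IP_s$ set, then we could stitch these colorings together to get a $k$-coloring of $(\mathcal{P}_f(\ben),\uplus)$ with no monochromatic $IP_s$ set, contradicting Lemma~\ref{FUthm} (which produces a monochromatic full $IP$ set, hence in particular a monochromatic $IP_s$ set).

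First I would fix $s,k\in\ben$ and argue by contradiction: suppose that for every $r\in\ben$ there is a $k$-coloring $\chi_r:{\mathcal F}_r\to\nhat{k}$ admitting no monochromatic $IP_s$ set in $({\mathcal F}_r,\uplus)$. Next I would set up the compactness machinery. The natural space is $\nhat{k}^{\mathcal{P}_f(\ben)}$ with the product topology, which is compact; equivalently one can run a direct K\"onig's-lemma / diagonal argument on the tree of partial colorings. For each $r$, extend $\chi_r$ arbitrarily to all of $\mathcal{P}_f(\ben)$ (this extension is irrelevant). By compactness, the sequence $\langle\chi_r\rangle_{r=1}^\infty$ has a cluster point $\chi:\mathcal{P}_f(\ben)\to\nhat{k}$; concretely, for every finite $D\subseteq\mathcal{P}_f(\ben)$ there are infinitely many $r$ with $\chi_r\!\restriction\! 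D=\chi\!\restriction\! D$.

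Then I would derive the contradiction. By Lemma~\ref{FUthm}, applied to the coloring $\chi$, there is a sequence $\langle X_n\rangle_{n=1}^\infty$ in $\mathcal{P}_f(\ben)$ with $\max X_n<\min X_{n+1}$ for all $n$ such that $FU(\langle X_n\rangle_{n=1}^\infty)$ is $\chi$-monochromatic; in particular the finite set $E=FU(\langle X_n\rangle_{n=1}^s)=\{\biguplus_{t\in H}X_t:H\in\mathcal{P}_f(\nhat{s})\}$ is $\chi$-monochromatic and all the relevant $\uplus$-products are defined. Let $N=\max X_s$. Choose $r$ with $r\geq N$ large enough that $\chi_r$ agrees with $\chi$ on the finite set $\{X_1,\ldots,X_s\}\cup E$. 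Since $r\geq N$, each $X_t\subseteq\nhat{r}$, so $X_1,\ldots,X_s$ is a legitimate $IP_s$-generating sequence in $({\mathcal F}_r,\uplus)$, and its set of $\uplus$-products is exactly $E\subseteq{\mathcal F}_r$, which is $\chi_r$-monochromatic. This contradicts the choice of $\chi_r$, completing the proof.

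The main obstacle, and the only point requiring care, is the bookkeeping in the compactness step: one must make sure the cluster point $\chi$ is a genuine total coloring of $\mathcal{P}_f(\ben)$ so that Lemma~\ref{FUthm} applies to it, and then that the finitely many sets whose $\chi$-color we need to control ($X_1,\ldots,X_s$ together with all $2^s-1$ of their partial unions) are captured by a single $r$ that is simultaneously large enough to contain $\nhat{N}$. Both are routine once the product-topology (or tree) framework is in place; there is no real combinatorial difficulty beyond invoking Lemma~\ref{FUthm}. (One could alternatively phrase the whole argument as an ultrafilter limit of the $\chi_r$ along a nonprincipal ultrafilter on $\ben$, which makes the cluster point canonical, but the elementary version above suffices.)
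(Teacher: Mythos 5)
Your proof is correct and is essentially the paper's argument: both are the standard compactness passage from Lemma \ref{FUthm} to the finite statement, the paper realizing the limit coloring via nested infinite index sets $B_m$ (K\"onig's lemma style) where you take a cluster point in the product topology, and both conclude by locating a single $r\geq\max X_s$ whose coloring agrees with the limit on the finite set $FU(\langle X_t\rangle_{t=1}^s)$.
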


\begin{proof} Suppose the conclusion fails. For each $r\in\ben$ choose\\
$c_r:{\mathcal F}_r\to\nhat{k}$ such that there is no monochromatic
$IP_s$ set in $({\mathcal F}_r,\uplus)$.

Given any $r$, the set of functions from ${\mathcal F}_r$ to $\nhat{k}$
is finite. Choose an infinite subset $B_1$ of $\ben$ such that
for any $a,b\in B_1$, $c_a$ and $c_b$ agree on ${\mathcal F}_1$.
Inductively, given $m\in\ben\setminus \{1\}$ and infinite
$B_{m-1}$, choose an infinite set $B_m\subseteq B_{m-1}$ with
$\min B_m\geq m$ such that for any $a,b\in B_m$, 
$c_a$ and $c_b$ agree on ${\mathcal F}_m$.

Define $d:\mathcal{P}_f(\ben)\to\nhat{k}$ as follows. For $F\in\mathcal{P}_f(\ben)$,
let $m=\max F$, pick $t\in B_m$ and let $d(F)=c_t(F)$. (So for
all $a\in B_m$, $d(F)=c_a(F)$.) Pick by Lemma \ref{FUthm},
a sequence $\langle X_n\rangle_{n=1}^\infty$ in $\mathcal{P}_f(\ben)$
such that for each $n\in\ben$, $\max X_n<\min X_{n+1}$ and
$\{\bigcup_{n\in H}X_n:H\in\mathcal{P}_f(\ben)\}$ is monochromatic with respect to
$d$. 

Let $m=\max X_s$ and pick $a\in B_m$. Then $c_a$ is 
monochromatic on\\ $FU(\langle X_t\rangle_{t=1}^s)$, a contradiction.
\end{proof}

\begin{lemma}\label{finFUb} Let $s,k\in\ben$. There exists
$r\in\ben$ such that whenever $A$ is an $IP_r$ set in $(\mathcal{P}_f(\ben),\uplus)$
and $A$ is $k$-colored, there exists a monochromatic $IP_s$ set $B\subseteq A$.
\end{lemma}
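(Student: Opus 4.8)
The plan is to deduce this from Lemma \ref{finFUa} by transporting a finite coloring along the natural identification of $(\mathcal{F}_r,\uplus)$ with a copy of itself sitting inside $(\mathcal{P}_f(\ben),\uplus)$. First, given $s,k\in\ben$, I take $r\in\ben$ to be the number guaranteed by Lemma \ref{finFUa} for these $s$ and $k$, and I claim this $r$ works.

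So suppose $A$ is an $IP_r$ set in $(\mathcal{P}_f(\ben),\uplus)$ and fix a $k$-coloring $\chi$ of $A$. I pick a sequence $\langle X_t\rangle_{t=1}^r$ in $\mathcal{P}_f(\ben)$ witnessing that $A$ is an $IP_r$ set; by the definition of $IP_r$ in $(\mathcal{P}_f(\ben),\uplus)$ this means $\max X_t<\min X_{t+1}$ for $1\leq t<r$ and $FU(\langle X_t\rangle_{t=1}^r)\subseteq A$. I then define a $k$-coloring $c$ of $\mathcal{F}_r$ by $c(H)=\chi\bigl(\bigcup_{t\in H}X_t\bigr)$, which is legitimate since $\bigcup_{t\in H}X_t\in FU(\langle X_t\rangle_{t=1}^r)\subseteq A$. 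Applying Lemma \ref{finFUa} to $c$ yields a sequence $\langle H_i\rangle_{i=1}^s$ in $\mathcal{F}_r$ with $\max H_i<\min H_{i+1}$ for $1\leq i<s$ and $FU(\langle H_i\rangle_{i=1}^s)$ monochromatic under $c$, say of color $j$.

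To finish I set $Y_i=\bigcup_{t\in H_i}X_t$ for $1\leq i\leq s$ and let $B=FU(\langle Y_i\rangle_{i=1}^s)$. Because the $X_t$ occur in strictly increasing blocks, $\max Y_i=\max X_{\max H_i}<\min X_{\min H_{i+1}}=\min Y_{i+1}$, so $\langle Y_i\rangle_{i=1}^s$ is a valid witnessing sequence and $B$ is an $IP_s$ set in $(\mathcal{P}_f(\ben),\uplus)$. For any $K\in\mathcal{P}_f(\nhat{s})$ one has $\bigcup_{i\in K}Y_i=\bigcup_{t\in\bigcup_{i\in K}H_i}X_t$ with $\bigcup_{i\in K}H_i\in FU(\langle H_i\rangle_{i=1}^s)$, hence $\chi\bigl(\bigcup_{i\in K}Y_i\bigr)=c\bigl(\bigcup_{i\in K}H_i\bigr)=j$, so $B$ is monochromatic; and $B\subseteq FU(\langle X_t\rangle_{t=1}^r)\subseteq A$. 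The only step needing care is this last paragraph, namely checking that the increasing-block hypothesis on $\langle X_t\rangle_{t=1}^r$ pushes forward to the increasing-block property of $\langle Y_i\rangle_{i=1}^s$ and that unions of the $Y_i$ are computed via the corresponding index sets $H_i$ — equivalently, that $H\mapsto\bigcup_{t\in H}X_t$ is an isomorphism of $(\mathcal{F}_r,\uplus)$ onto $(FU(\langle X_t\rangle_{t=1}^r),\uplus)$; both facts are immediate from the block condition, so there is no genuine obstacle.
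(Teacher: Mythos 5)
Your proof is correct and follows essentially the same route as the paper's: choose $r$ from Lemma \ref{finFUa}, pull the coloring of $A$ back to ${\mathcal F}_r$ via $H\mapsto\bigcup_{t\in H}X_t$, and push the resulting monochromatic $IP_s$ set forward. The extra verification of the increasing-block property of the $Y_i$ is a detail the paper leaves implicit, but it changes nothing of substance.
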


\begin{proof} Pick $r$ as guaranteed by Lemma \ref{finFUa}. Let
$A=FU(\langle H_j\rangle_{j=1}^r)$ be an $IP_r$ set in $(\mathcal{P}_f(\ben),\uplus)$ and let
$c:A\to\nhat{k}$. Define
$d:{\mathcal F}_r\to\nhat{k}$ by for $F\in{\mathcal F}_r$,
$d(F)=c(\bigcup_{j\in F} H_j)$. Pick
$\langle F_i\rangle_{i=1}^s$ such that $FU(\langle F_i\rangle_{i=1}^s)$
is an $IP_s$ set in $({\mathcal F}_r,\uplus)$ on which $d$ is constant
and let $m$ be that constant value. For $i\in\nhat{s}$, let
$K_i=\bigcup_{j\in F_i} H_j$. To see that 
$c$ is constantly equal to $m$ on $B=FU(\langle K_i\rangle_{i=1}^s)$,
let $L\in\mathcal{P}_f(\nhat{s})$. Then\\ $m=d(\bigcup_{i\in L}F_i)=
c(\bigcup\{H_j:j\in\bigcup_{i\in L}F_i\})=
c(\bigcup_{i\in L}K_i)$.
\end{proof}

The proof of the following lemma is based on the proofs
of \cite[Propositions 2.4 and 2.5]{BR}.

\begin{lemma}\label{IPrstarsstar} Let $r,s\in\ben$. There exists 
$q\in\ben$ such that if $A$ is an $IP_r^*$ set in $(\mathcal{P}_f(\ben),\uplus)$
and $B$ is an $IP_s^*$ set in $(\mathcal{P}_f(\ben),\uplus)$, then
$A\cap B$ is an $IP_q^*$ in $(\mathcal{P}_f(\ben),\uplus)$. \end{lemma}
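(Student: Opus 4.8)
The statement is a finitary Ramsey-type ``intersection of $\mathrm{IP}^*$ sets'' result, and the natural strategy is a compactness argument that bootstraps Lemma \ref{finFUb} exactly as \cite[Propositions 2.4 and 2.5]{BR} bootstrap Hindman's theorem in $(\ben,+)$. First I would reduce to a statement purely about the finite partial semigroups $({\mathcal F}_q,\uplus)$: it suffices to find $q$ such that whenever ${\mathcal F}_q$ is $2$-colored, one color class contains an $\mathrm{IP}_r$ set \emph{or} the other contains an $\mathrm{IP}_s$ set. Indeed, if $A$ is $\mathrm{IP}_r^*$ and $B$ is $\mathrm{IP}_s^*$ in $(\mathcal{P}_f(\ben),\uplus)$ and $C = FU(\langle H_j\rangle_{j=1}^q)$ is any $\mathrm{IP}_q$ set, color each $\bigcup_{j\in F}H_j$ ($F\in{\mathcal F}_q$) by whether it lies in $A$ or not; pulling the monochromatic sub-$\mathrm{IP}$ set back through $F\mapsto\bigcup_{j\in F}H_j$ (as in the last line of Lemma \ref{finFUb}) gives either an $\mathrm{IP}_r$ set inside $C\cap A$ or an $\mathrm{IP}_s$ set inside $C\setminus A$; but $C\setminus A$ being $\mathrm{IP}_s$ contradicts $A$ being $\mathrm{IP}_r^*$ only after we also use that $B$ is $\mathrm{IP}_s^*$ — so one has to be slightly careful and color with the pair ``in $A$'' / ``in $B^c$'' and use that these cannot both fail on an $\mathrm{IP}$ set of the relevant size. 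In any case the point is that $C\cap A\cap B$ will then be seen to contain an $\mathrm{IP}_{\min(r,s)}$-ish set, and choosing $q$ uniformly makes $A\cap B$ an $\mathrm{IP}_q^*$ set.

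**Carrying it out.** Concretely, I would proceed in three steps. Step 1: state and prove the finite pigeonhole lemma — there is $q=q(r,s)$ so that for every $2$-coloring of ${\mathcal F}_q$ there is a monochromatic-in-the-first-color $\mathrm{IP}_r$ set or a monochromatic-in-the-second-color $\mathrm{IP}_s$ set in $({\mathcal F}_q,\uplus)$. This follows from the case $k=2$ of Lemma \ref{finFUa} applied with the parameter $\max(r,s)$: a $2$-coloring of ${\mathcal F}_q$ yields a monochromatic $\mathrm{IP}_{\max(r,s)}$ set, which in particular contains a monochromatic $\mathrm{IP}_r$ set (if its color is the first) or a monochromatic $\mathrm{IP}_s$ set (if its color is the second), since any $\mathrm{IP}_n$ set contains an $\mathrm{IP}_{n'}$ set for $n'\le n$. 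Step 2: transfer to $(\mathcal{P}_f(\ben),\uplus)$ via the pullback in Lemma \ref{finFUb}: given $A$ an $\mathrm{IP}_r^*$ set, $B$ an $\mathrm{IP}_s^*$ set, and $C=FU(\langle H_j\rangle_{j=1}^q)$ an arbitrary $\mathrm{IP}_q$ set in $(\mathcal{P}_f(\ben),\uplus)$, define $d:{\mathcal F}_q\to\{1,2\}$ by $d(F)=1$ if $\bigcup_{j\in F}H_j\in A$ and $d(F)=2$ otherwise. Step 3: conclude. Apply Step 1 to $d$. If we get an $\mathrm{IP}_r$ set in color $1$, say $FU(\langle F_i\rangle_{i=1}^r)$, then $FU(\langle\bigcup_{j\in F_i}H_j\rangle_{i=1}^r)$ is an $\mathrm{IP}_r$ set contained in $A\cap C$ (the containment and the $\mathrm{IP}_r$-property are exactly the computation at the end of Lemma \ref{finFUb}). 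If instead we get an $\mathrm{IP}_s$ set in color $2$, the corresponding $\mathrm{IP}_s$ set lies in $C\setminus A$; but this contradicts $A$ being $\mathrm{IP}_r^*$ \emph{provided} $s\ge r$, which we may assume by symmetry after enlarging — actually the clean fix is to run Step 1 with the guaranteed $\mathrm{IP}$-set size being $\max(r,s)$ and note color $2$ then gives an $\mathrm{IP}_r$ set in $C\setminus A$, directly contradicting $A\in\mathrm{IP}_r^*$. So color $1$ must occur, giving an $\mathrm{IP}_r$ set $D\subseteq A\cap C$. Now repeat the argument with $D$ in the role of $C$ (note $D$ is an $\mathrm{IP}_r$ set, and we may start over with the pair $(s,r)$ choosing $q'=q(s,r)$ and taking $q = q(r,s)$ large enough to also serve as an $\mathrm{IP}_{q'}$ source): coloring $D$ by membership in $B$ forces an $\mathrm{IP}_s$ set $E\subseteq B\cap D\subseteq A\cap B\cap C$. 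Since $C$ was an arbitrary $\mathrm{IP}_q$ set, $A\cap B$ meets every $\mathrm{IP}_q$ set in an $\mathrm{IP}_s$ set, hence is $\mathrm{IP}_q^*$ (as being $\mathrm{IP}_q^*$ only requires nonempty intersection with every $\mathrm{IP}_q$ set).

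**Main obstacle.** The genuine content is all in Lemma \ref{finFUa}, which is already proved, so the remaining work is bookkeeping; the one place where care is needed is getting the quantifier on $q$ right — $q$ must simultaneously be a valid ``$\mathrm{IP}_q$ implies monochromatic $\mathrm{IP}_r$'' threshold for the first pullback \emph{and}, after passing to the $\mathrm{IP}_r$ set $D$, a valid threshold for extracting an $\mathrm{IP}_s$ set from a $2$-coloring of an $\mathrm{IP}_r$ set. The cleanest route is: let $q_1=q(r,\max(r,s))$ be the ${\mathcal F}$-threshold from Lemma \ref{finFUa} forcing in any $2$-coloring of ${\mathcal F}_{q_1}$ a monochromatic $\mathrm{IP}_{\max(r,s)}$ set (hence an $\mathrm{IP}_r$ set of the required color), feed that through Lemma \ref{finFUb} to get an ``$\mathrm{IP}$-in-$\mathcal{P}_f(\ben)$'' threshold $q$, and then \emph{re}apply Lemma \ref{finFUb} inside the resulting $\mathrm{IP}_r$ set — but this second application needs the $\mathrm{IP}_r$ set to be at least $\mathrm{IP}_{q_2}$ for the appropriate $q_2$, so in fact one should choose the numbers in the order: fix the target $\mathrm{IP}_s$, get $q_2$ from Lemma \ref{finFUb} (for extracting $\mathrm{IP}_s$ from $2$-colored $\mathrm{IP}_{q_2}$ sets), then demand the first $\mathrm{IP}$ set $D$ be an $\mathrm{IP}_{q_2}$ set, i.e. run the first stage with ``$q_2$'' in place of ``$r$'', obtaining the final $q$. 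Threading this dependency correctly is the only subtlety; everything else is the same compactness-and-pullback pattern used in Lemmas \ref{finFUa} and \ref{finFUb}.
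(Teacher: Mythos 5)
Your proof is correct once the threshold-threading you describe in your final paragraph is carried out, and it rests on the same two ingredients as the paper's proof: Lemma \ref{finFUb} with $k=2$, and the observation that if one colors an $IP$ set by membership in a set $E$ and $E$ is a suitable $IP^*$ set, then the extracted monochromatic $IP$ set is forced to lie inside $E$. The difference is in the order of operations. You extract twice, in nested fashion: first an $IP_{q_2}$ set $D\subseteq A\cap C$, then an $IP_s$ set $E\subseteq B\cap D$; it is this nesting that creates the dependency of the first threshold on the second, which you correctly identify as the delicate point. The paper avoids the issue entirely by assuming without loss of generality that $s\geq r$ and extracting only once: taking $q$ to be the Lemma \ref{finFUb} threshold for producing monochromatic $IP_s$ sets from $2$-colored $IP_q$ sets, it shows (coloring $C$ by membership in $B$) that $B\cap C$ contains an $IP_s$ set, hence an $IP_r$ set, and then $A\cap(B\cap C)\neq\emp$ follows from a bare appeal to $A$ being an $IP_r^*$ set --- no second coloring is needed. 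This buys a single parameter with no threading, at the cost of concluding only that $A\cap B\cap C$ is nonempty rather than that it contains an $IP_s$ set; but nonemptiness is all the lemma requires. Your version does prove the formally stronger statement that $A\cap B$ meets every $IP_q$ set in an $IP_s$ set, which is not needed here.
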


\begin{proof} Assume that $A$ is an $IP_r^*$ set in $(\mathcal{P}_f(\ben),\uplus)$
and $B$ is an $IP_s^*$ set in $(\mathcal{P}_f(\ben),\uplus)$ and assume without 
loss of generality that $s\geq r$. 

Pick by Lemma \ref{finFUb}
some $q\in\ben$ such that whenever $C$ is an $IP_q$ set in $(\mathcal{P}_f(\ben),\uplus)$
and $C$ is 2-colored, there is a monochromatic $IP_s$ set $D\subseteq C$.
We claim: 

\hbox to \hsize{(*)\hfill If $C$ is an $IP_q$ set
and $E$ is an $IP_s^*$ set,
then $C\cap E$ is an $IP_s$ set.\hfill}

To establish (*), let $C$ be an $IP_q$ set and let 
$E$ be an $IP_s^*$ set. Define $c:C\to\{1,2\}$ by $c(x)=1$ if and only if
$x\in C\cap E$. Pick an $IP_s$ set $D\subseteq C$ which is monochromatic 
with respect to $c$. Pick $x\in E\cap D$. Then $c(x)=1$ so $D\subseteq C\cap E$.

To see that $A\cap B$ is an $IP_q^*$ set, let $C$ be an $IP_q$ set.
By (*), $B\cap C$ is an $IP_s$ set. Since $s\geq r$, $B\cap C$ is an $IP_r$
set so $(A\cap B)\cap C=A\cap (B\cap C)\neq\emp$ as required.
\end{proof}

The following crucial lemma is based on \cite[Lemma 2]{G}. It does not 
apply to arbitrary partial semigroups because the functions $g_f$ as
defined in the proof of \cite[Lemma 2]{G} are not likely to be adequate 
sequences.

We write $A=\{x_1,x_2,\ldots, x_m\}_<$ to abbreviate the statement\\
``$A=\{x_1,x_2,\ldots, x_m\}$ and $x_1<x_2<\ldots<x_m$.''

\begin{lemma} \label{lemTheta} Let $(S,*)$ be an adequate partial semigroup
with a two sided identity $e$, let $A$ be a CR set in $S$,
let $F\in\mathcal{P}_f({\mathcal T})$, and let $L\in\mathcal{P}_f(S)$.
Let $\Theta=\Theta(A,F,L)=\{M\in\mathcal{P}_f(\ben):\hbox{if }m=|M|\hbox{ and }
M=\{t(1),t(2),\ldots,t(m)\}_<\,,\\
\hbox{then }
(\exists a\in S^{m+1})(\forall f\in F)\\
(a(1)*f(t(1))*a(2)*\ldots
*a(m)*f(t(m))*a(m+1)\in A\cap\sigma(L))\}$.
If $k\in\ben$, $|F|\leq k$, and $r=r(A,k,L)$, then $\Theta$ is an $IP_r^*$ 
set in $(\mathcal{P}_f(\ben),\uplus)$.
\end{lemma}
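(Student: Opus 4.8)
The plan is to show that $\Theta$ meets every $IP_r$ set in $(\mathcal{P}_f(\ben),\uplus)$. So fix an arbitrary $IP_r$ set $C = FU(\langle H_1, H_2, \ldots, H_r\rangle)$ in $(\mathcal{P}_f(\ben),\uplus)$, where $\max H_j < \min H_{j+1}$ for $j < r$; I must produce a member of $C$ lying in $\Theta$. The idea, following \cite[Lemma 2]{G}, is to ``reindex'' the sequences in $F$ along the blocks $H_1, \ldots, H_r$: for each $f \in F$, define a new sequence $g_f : \ben \to S$ by setting $g_f(j) = \prod_{t \in H_j} f(t)$ for $j \in \nhat{r}$ (product in increasing order of indices, which is defined since $f$ is an adequate sequence), and for $j > r$ setting $g_f(j)$ to be an appropriate tail element of $f$ — concretely, $g_f(j) = f(n_j)$ for a strictly increasing choice of indices $n_j > \max H_r$, chosen far enough out that property (ii) of adequacy for $g_f$ is inherited from that of $f$. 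The first thing to check is that each $g_f$ is again an adequate sequence in $S$; this is the point flagged in the remark preceding the lemma as failing for arbitrary partial semigroups, and here it goes through precisely because $S$ has a two-sided identity — actually the identity plays its role elsewhere, see below — and because finite products over the $H_j$ are defined and their further products telescope into products of $f$ over finite subsets of $\ben$, which lie in $\sigma(F')$ for any prescribed $F'$ once the starting index is large.

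Next, set $G = \{g_f : f \in F\}$. Then $G \in \mathcal{P}_f({\mathcal T})$ with $|G| \le |F| \le k$. Since $A$ is a CR set in $S$ and $r = r(A,k,L)$ was the bound furnished by the definition of CR for the pair $(k,L)$, applying the CR property to $G$ yields $m \in \ben$, $a \in S^{m+1}$, and $t(1) < t(2) < \cdots < t(m) \le r$ in $\ben$ such that for every $f \in F$,
\[
a(1) * g_f(t(1)) * a(2) * \cdots * a(m) * g_f(t(m)) * a(m+1) \in A \cap \sigma(L).
\]
Now unwind the definition of $g_f$: since each $t(i) \le r$, we have $g_f(t(i)) = \prod_{t \in H_{t(i)}} f(t)$. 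Put $M = H_{t(1)} \uplus H_{t(2)} \uplus \cdots \uplus H_{t(m)} = \bigcup_{i=1}^m H_{t(i)} \in C$ (the union is disjoint and $\uplus$-admissible because $t(1) < \cdots < t(m)$ and the $H_j$ are consecutive blocks). Writing $M = \{s(1), s(2), \ldots, s(N)\}_<$ with $N = |M| = \sum_i |H_{t(i)}|$, I need to exhibit a single vector $\tilde a \in S^{N+1}$ witnessing $M \in \Theta$, i.e. realizing $\tilde a(1) * f(s(1)) * \tilde a(2) * \cdots * \tilde a(N) * f(s(N)) * \tilde a(N+1) \in A \cap \sigma(L)$ for all $f \in F$ simultaneously. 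This is exactly where the identity $e$ is used: each factor $g_f(t(i)) = f(t)_{t \in H_{t(i)}}$ is already a product of consecutive $f$-terms over the interval $H_{t(i)}$, and the ``gaps'' within $M$ between these intervals, as well as between consecutive terms inside one $H_{t(i)}$, get filled with $e$'s, while the original $a(i)$'s are inserted at the boundaries between successive blocks $H_{t(i)}$ and $H_{t(i+1)}$. Thus $\tilde a$ is built by: $\tilde a(1) = a(1)$; within block $H_{t(i)}$ the internal separators are $e$; at the end of block $H_{t(i)}$ (for $i < m$) the separator is $a(i+1)$; and $\tilde a(N+1) = a(m+1)$.

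The routine verification is then that with this $\tilde a$, the long product $\tilde a(1) * f(s(1)) * \cdots * \tilde a(N) * f(s(N)) * \tilde a(N+1)$ collapses, using associativity of the partial operation and $x * e = e * x = x$, to exactly $a(1) * g_f(t(1)) * a(2) * \cdots * a(m) * g_f(t(m)) * a(m+1)$, which lies in $A \cap \sigma(L)$ by the choice above. Hence $M \in \Theta \cap C$, and since $C$ was an arbitrary $IP_r$ set, $\Theta$ is an $IP_r^*$ set in $(\mathcal{P}_f(\ben),\uplus)$. I expect the main obstacle to be the bookkeeping in the last step — carefully tracking which index of $\tilde a$ receives $e$, which receives some $a(i)$, and confirming that all the partial-operation terms are defined (so one must check at each stage that the relevant element lies in the appropriate $\vvarphi$ or $\sigma$, which follows because the $f(t)$ over $H_{t(i)}$ have a defined product and $\sigma(L)$-membership of the final result forces definedness of all the intermediate left-segments) — together with the preliminary check that $g_f \in \mathcal{T}$, which is where adequacy condition (ii) must be re-derived for $g_f$ from that for $f$.
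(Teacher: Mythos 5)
Your proposal is correct and follows essentially the same route as the paper's proof: reindex each $f\in F$ along the blocks $H_1,\ldots,H_r$ to obtain adequate sequences $g_f$ (the paper writes $g_f(n)=f(b(n,1))*e*\cdots*e*f(b(n,\alpha_n))$, which is the same as your $\prod_{t\in H_n}f(t)$, with a tail of $f$ shifted past $\max H_r$), apply the CR property with the given $r$ to $\{g_f:f\in F\}$, and then expand the resulting tuple $a$ into a longer tuple by inserting $e$ at the internal positions of each block and placing the $a(i)$'s at the block boundaries so the long product collapses to the short one. The differences from the paper's argument are purely notational.
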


\begin{proof} Assume that $k\in\ben$, $|F|\leq k$, and $r=r(A,k,L)$. 
To see that $\Theta$ is an $IP_r^*$ set, let $B$ be an
$IP_r$ set in $(\mathcal{P}_f(\ben),\uplus)$ and pick $\langle H_n\rangle_{n=1}^r$ in $\mathcal{P}_f(\ben)$ such that 
$\max H_n<\min H_{n+1}$ for $n\in\nhat{r-1}$ and $FU(\langle H_n\rangle_{n=1}^r)\subseteq B$.

For $n\in\nhat{r}$ let $\alpha_n=|H_n|$ and write
$$H_n=\{b(n,1),b(n,2),\ldots,b(n,\alpha_n)\}_<\,.$$ 
Let $z=\max H_r=b(r,\alpha_r)$.
For $f\in F$, define $g_f\in\ntos$ by, for $n\in\nhat{r}$,
$$g_f(n)=f(b(n,1))*e*f(b(n,2))*e*\ldots*e*f(b(n,\alpha_n))$$
and $g_f(n)=f(z+n)$ if $n>r$.  It is routine to verify that each $g_f\in{\mathcal T}$.

Now $\{g_f:f\in F\}\in \mathcal{P}_f(\mathcal{T})_{\leq k}$.  Pick $m\in\ben$,
$a\in S^{m+1}$, and $t(1)<t(2)<\ldots <t(m)\leq r$ in $\ben$ such that
for all $f\in F$,\\
$a(1)*g_f(t(1))*a(2)*\ldots* a(m)*g_f(t(m))*a(m+1)\in A\cap\sigma(L)$.

We claim that $\bigcup_{j=1}^m H_{t(j)}\in \Theta$, so that $\Theta\cap B\neq\emp$.

Let $p=|\bigcup_{j=1}^m H_{t(j)}|=\sum_{j=1}^m\alpha_j$. Then the elements
of $\bigcup_{t=1}^m H_{t(j)}=\{s(1),s(2),\ldots,s(p)\}_<$ listed in increasing order are:

$\begin{array}{l}
b(t(1),1),b(t(1),2),\ldots,b(t(1),\alpha_{t(1)}),\\
b(t(2),1),b(t(2),2),\ldots,b(t(2),\alpha_{t(2)}),\\
\hskip 50 pt \vdots\\
b(t(m),1),b(t(m),2),\ldots,b(t(m),\alpha_{t(m)})\,.\end{array}$

Define $c\in S^{p+1}$ by $c(p+1)=a(m+1)$ and for $i\in\nhat{p}$,
$$c(i)=\left\{\begin{array}{cl}
a(j)&\hbox{if }j\in\nhat{m}\hbox{ and }s(i)=b(t(j),1)\\
e&\hbox{if }s(i)\notin\{b(t(1),1),b(t(2),1),\ldots,b(t(m),1)\}\,.\end{array}\right.$$

Then for each $f\in F$, 
$$\begin{array}{l}
c(1)*f(s(1))*c(2)*\ldots*c(p)*f(s(p))*c(p+1)={}\\
a(1)*g_f(t(1))*a(2)*\ldots* a(m)*g_f(t(m))*a(m+1)\in A\cap\sigma(L)\,.
\end{array}$$
So $\bigcup_{j=1}^m H_{t(j)}\in \Theta$ as claimed.
\end{proof}

\begin{theorem}\label{thmprod} Let $(S,*)$ and $(T,*)$ be adequate partial
semigroups, each with a two sided identity, let $A$ be a CR set in $S$,
and let $B$ be a CR set in $T$. Then $A\times B$ is a CR set in $S\times T$.
\end{theorem}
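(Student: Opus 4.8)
The plan is to reduce the two-sided product statement to a one-sided version by combining Theorem~\ref{AtimesA} (which handles $A\times A$ in $S\times S$) with the machinery of Lemma~\ref{lemTheta}. The difficulty in a direct approach is that in $S\times T$ the time-indices $t(1)<\ldots<t(m)$ must be shared by the sequences coming from \emph{both} coordinates, while the witnesses $a$ live independently in $S^{m+1}$ and $T^{m+1}$; so one cannot simply run the $S$-argument and the $T$-argument separately. The resolution is to work with the sets $\Theta(A,F_S,L_S)$ and $\Theta(B,F_T,L_T)$ of ``good index sets'' in the common parameter semigroup $(\mathcal{P}_f(\ben),\uplus)$: Lemma~\ref{lemTheta} tells us each of these is an $IP_r^*$ set for an appropriate $r$, and an $IP^*$ set in $(\mathcal{P}_f(\ben),\uplus)$ by definition contains an increasing $FU$-set, so a \emph{common} good index set can be extracted from the intersection.

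First I would fix $k\in\ben$ and $L\in\mathcal{P}_f(S\times T)$, set $L_S=\pi_1[L]$ and $L_T=\pi_2[L]$, and (using that $A$ is CR in $S$ and $B$ is CR in $T$) obtain $r_1=r(A,k,L_S)$ and $r_2=r(B,k,L_T)$ from Definition~\ref{defCRpartial}. Put $r=\max\{r_1,r_2\}$; note an $IP_{r}^*$ set is automatically $IP_{r_i}^*$ for $r_i\le r$. Now given $F\in\mathcal{P}_f(\mathcal{T}_{S\times T})_{\le k}$, let $F_S=\{\pi_1\circ f:f\in F\}$ and $F_T=\{\pi_2\circ f:f\in F\}$; by Lemma~\ref{adeqpartial}(2) these are finite subsets of $\mathcal{T}_S$, $\mathcal{T}_T$ of size at most $k$. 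Apply Lemma~\ref{lemTheta} to get that $\Theta_S:=\Theta(A,F_S,L_S)$ is $IP_{r_1}^*$ in $(\mathcal{P}_f(\ben),\uplus)$ and $\Theta_T:=\Theta(B,F_T,L_T)$ is $IP_{r_2}^*$ there. Since both are $IP_r^*$ sets and $IP_r^*$ is closed under finite intersection up to a single $FU$-witness (indeed each $IP_r^*$ set meets every $IP_r$ set, and by Lemma~\ref{FUthm}/the definition each contains an increasing $FU_r$-configuration, so $\Theta_S\cap\Theta_T$ is nonempty and in fact contains an increasing $FU$-set), pick $M\in\Theta_S\cap\Theta_T$ with $M=\{t(1)<\ldots<t(m)\}$ and $m\le r$.

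Then by membership of $M$ in $\Theta_S$ there is $a\in S^{m+1}$ with $a(1)*(\pi_1\circ f)(t(1))*\cdots*a(m+1)\in A\cap\sigma(L_S)$ for all $f\in F$, and by membership in $\Theta_T$ there is $a'\in T^{m+1}$ with $a'(1)*(\pi_2\circ f)(t(1))*\cdots*a'(m+1)\in B\cap\sigma(L_T)$ for all $f\in F$. Set $b(i)=(a(i),a'(i))\in S\times T$ for $i\in\nhat{m+1}$. Then for each $f\in F$,
$$b(1)*f(t(1))*b(2)*\cdots*b(m)*f(t(m))*b(m+1)\in (A\times B)\cap\sigma(L),$$
using that $\sigma(L)\supseteq\sigma(L_S)\times\sigma(L_T)$ in $S\times T$ (cf.\ the proof of Lemma~\ref{adeqpartial}(1)) and that $t(m)\le r$. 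Since $r$ was chosen depending only on $k$ and $L$, this verifies Definition~\ref{defCRpartial} and shows $A\times B$ is a CR set in $S\times T$.

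The step I expect to be the main obstacle is the extraction of a \emph{common} witnessing index set $M$, i.e.\ showing that the intersection of the two $IP^*$ sets $\Theta_S,\Theta_T$ still contains an increasing $FU$-configuration of the required bounded height --- this is exactly where the finitary Ramsey-type lemmas (Lemmas~\ref{finFUa}, \ref{finFUb}, \ref{IPrstarsstar}) are needed, to upgrade ``$IP^*$ sets meet every $IP$ set'' to ``the intersection of two $IP^*$ sets is $IP_q^*$ for a controlled $q$'' so that the bound $m\le r$ can be maintained uniformly. Once that combinatorial input is in place, the passage to the product via the coordinatewise witnesses $b(i)=(a(i),a'(i))$ is routine, mirroring the end of the proof of Theorem~\ref{AtimesA}. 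One should also double-check the identity hypothesis is used only through Lemma~\ref{lemTheta} (where the $g_f$ are built using $e$), which is why it appears in the statement.
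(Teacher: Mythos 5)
Your proposal is the paper's own argument: reduce to the sets $\Theta(A,\pi_1\circ F,\pi_1[L])$ and $\Theta(B,\pi_2\circ F,\pi_2[L])$ via Lemma~\ref{lemTheta}, use Lemma~\ref{IPrstarsstar} to control their intersection, extract a common index set, and combine the witnesses coordinatewise. The structure is right, but two details in your main text need repair. First, the bound you announce in advance cannot be $\max\{r_1,r_2\}$: the intersection of two $IP_r^*$ sets need not be $IP_r^*$, so the quantity that plays the role of ``$r$'' in Definition~\ref{defCRpartial} must be the $q$ produced by Lemma~\ref{IPrstarsstar} from $r_1$ and $r_2$ (which is fine, since $q$ depends only on $k$ and $L$). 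Your closing paragraph shows you know this, but the body of the proof as written announces the wrong bound. Second, what you need from $M\in\Theta_S\cap\Theta_T$ is $\max M=t(m)\leq q$, not $|M|\leq r$; the clean way to get it is to observe that $\mathcal{P}_f(\nhat{q})=FU(\langle\{i\}\rangle_{i=1}^q)$ is itself an $IP_q$ set, so the $IP_q^*$ set $\Theta_S\cap\Theta_T$ meets it --- an $IP^*$ set need not \emph{contain} an $FU$-configuration for this step, it only needs to meet one that is already confined to $\nhat{q}$. (Also, your monotonicity remark is stated backwards: for $r_i\leq r$ it is $IP_{r_i}^*$ that implies $IP_r^*$, which is the direction you actually use.) With these corrections your argument coincides with the paper's proof.
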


\begin{proof} Let $k\in \ben$. To see that $A\times B$ is a $k$-CR set, 
let $L\in\mathcal{P}_f(S\times T)$. Let $F\in\mathcal{P}_f({\mathcal T}_{S\times T})$ with $|F|\leq k$.
Let $u=r(A,k,\pi_1[L])$ and let $v=r(B,k,\pi_2[L])$.
Pick by Lemma \ref{IPrstarsstar} some $q\in\ben$ such that whenever
$C$ is an $IP_u^*$ set in $(\mathcal{P}_f(\ben),\uplus)$
and $D$ is an $IP_v^*$ set in $(\mathcal{P}_f(\ben),\uplus)$, one has that
$C\cap D$ is an $IP_q^*$ set in $(\mathcal{P}_f(\ben),\uplus)$.

We shall show that there exist $m\in\ben$, $c\in(S\times T)^{m+1}$, and
$t(1)<t(2)<\ldots <t(m)\leq q$ in $\ben$ such that for each $f\in F$,
$$c(1)*f(t(1))*c(2)*\ldots*c(m)*f(t(m))*c(m+1)\in(A\times B)\cap\sigma(L)\,.$$
Let $G=\{\pi_1\circ f:f\in F\}$ and let $H=\{\pi_2\circ f:f\in F\}$.
Let $\Theta_1=\Theta(A,G,\pi_1[L])$ and $\Theta_2=\Theta(B,H,\pi_2[L])$ be
as defined in Lemma \ref{lemTheta}. Then by that lemma,
$\Theta_1$ is an $IP_u^*$ set in $(\mathcal{P}_f(\ben),\uplus)$ and
$\Theta_2$ is an $IP_v^*$ set in $(\mathcal{P}_f(\ben),\uplus)$ so 
$\Theta_1\cap\Theta_2$ is an $IP_q^*$ set in $(\mathcal{P}_f(\ben),\uplus)$
and thus $\Theta_1\cap \Theta_2\cap FU(\langle\{i\}\rangle_{i=1}^q)\neq\emp$.
Note that $FU(\langle\{i\}\rangle_{i=1}^q)=\mathcal{P}_f(\nhat{q}$.
Pick $R\in \Theta_1\cap \Theta_2\cap \mathcal{P}_f(\nhat{q})$ and let $m=|R|$.
Then $R=\{t(1),t(2),\ldots,t(m)\}_<$ and $t(m)\leq q$.
Since $R\in \Theta_1$, pick $a\in S^{m+1}$ such that for all $f\in F$,
$$a(1)*\pi_1(f(t(1)))*a(2)*\ldots*a(m)*\pi_1(f(t(m)))*a(m+1)\in A\cap \sigma(\pi_1[L])\,.$$
Since $R\in \Theta_2$, pick $b\in T^{m+1}$ such that for all $f\in F$,
$$b(1)*\pi_2(f(t(1)))*b(2)*\ldots*b(m)*\pi_2(f(t(m)))*b(m+1)\in B\cap \sigma(\pi_2[L])\,.$$

For $i\in\nhat{m}$ let $c(i)=(a(i),b(i))$. 
Then 
$$\begin{array}{l}
c(1)*f(t(1))*c(2)*\ldots*c(m)*f(t(m))*c(m+1)\in{}\\
 (A\times B)\cap(\sigma(\pi_1[L])\times
\sigma(\pi_2[L]))\subseteq(A\times B)\cap\sigma(L)\,.\end{array}$$ 
Thus $A\times B$ is a $k$-CR set.
\end{proof}

\section{Some simpler descriptions}\label{secsimpler}

It was shown in \cite[Lemma 14.14.2]{HS} that in commutative semigroups,
the definition of a J set can be replaced by the obvious simplified version.
Similarly in \cite[Lemma 2.3]{HHST} the corresponding result for CR sets was established.
We show in this section that if $(S,*)$ is an adequate partial semigroup, then
the corresponding simplification can be made for CR sets. It is some what surprising that we 
are not able to obtain the same result for J sets, since generally J sets have been easier 
to handle.  

\begin{lemma}\label{fpsigma} Let $(S,*)$ be an infinite adequate partial semigroup
and let $F\in\mathcal{P}_f(S)$. There exists a sequence $\seqx$ in $S$ such that
$FP(\seqx)\subseteq \sigma(F)$. In fact, if $p$ is an idempotent in
$\delta S$ and $A\in p$, then there exists a sequence $\seqx$ in $S$ such that
$FP(\seqx)\subseteq A$.\end{lemma}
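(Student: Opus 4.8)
The plan is to reduce the second statement to the first and to prove the first directly from the defining property of $\delta S$ and the algebra of idempotents. The two assertions are linked: the ``in fact'' clause yields the first statement because, when $S$ is infinite adequate, $\delta S$ is a compact right topological semigroup (Theorem \ref{thmdelta}) hence contains an idempotent $p$, and $\sigma(F)\in p$ since $p\in\delta S\subseteq\overline{\sigma(F)}$ and $\sigma(F)$ is a basic clopen subset of $\beta S$; applying the ``in fact'' clause with $A=\sigma(F)$ gives a sequence $\seqx$ with $FP(\seqx)\subseteq\sigma(F)$. So it suffices to prove the ``in fact'' clause.

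For the ``in fact'' clause I would run the standard construction of an FP-set inside a member of an idempotent ultrafilter (the proof of \cite[Theorem 5.8 or 5.12]{HS}), taking care that at each stage we stay inside $\delta S$ so that the partial products are defined. Recall that if $p=p*p$ and $A\in p$, then $A^\star=\{x\in A: x^{-1}A\in p\}\in p$, and for $x\in A^\star$ one has $x^{-1}A^\star\in p$. First I would choose $x_1\in A^\star\cap\sigma(\{?\})$ — more precisely, build the sequence recursively: having chosen $x_1,\dots,x_n$ with all products $\prod_{t\in H}x_t$ ($H\subseteq\nhat{n}$) defined and lying in $A^\star$, set $E_n=\{\prod_{t\in H}x_t: \emptyset\ne H\subseteq\nhat n\}$, a finite subset of $\delta$-good elements. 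Since each $y\in E_n$ lies in $A^\star$ we have $y^{-1}A^\star\in p$, and since $p\in\delta S$ we have $\sigma(E_n)\in p$; hence $A^\star\cap\sigma(E_n)\cap\bigcap_{y\in E_n}y^{-1}A^\star\in p$, so it is nonempty and we may pick $x_{n+1}$ in it. Because $x_{n+1}\in\sigma(E_n)$, every product $y*x_{n+1}$ with $y\in E_n$ is defined, and it lies in $A^\star$ by the choice of $x_{n+1}\in y^{-1}A^\star$; together with $x_{n+1}\in A^\star$ this shows all products over subsets of $\nhat{n+1}$ are defined and in $A^\star\subseteq A$. Then $FP(\seqx)\subseteq A$.

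The main obstacle — really the only point that differs from the classical semigroup argument — is bookkeeping the ``definedness'' of products: one must insert the factor $\sigma(E_n)\in p$ at each step so that the newly chosen $x_{n+1}$ is compatible (in the partial-operation sense) with all previously formed products, and one must check associativity guarantees that longer products formed later are still defined. This is exactly the kind of routine verification that the adequacy of $S$ and the definition $\delta S=\bigcap_{F\in\mathcal P_f(S)}\overline{\sigma(F)}$ are designed to make work, so I expect it to go through cleanly; the key facts used are that $\sigma(E)$ is clopen with $\overline{\sigma(E)}\supseteq\delta S$, that $A^\star\in p$ for $A\in p=p*p$, and that finite intersections of members of $p$ are members of $p$. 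I would also remark at the end that the construction in fact gives $\seqx$ an adequate sequence when combined with condition (ii) of Definition \ref{defadseq}, if that is needed downstream, but the stated conclusion only requires $FP(\seqx)\subseteq A$.
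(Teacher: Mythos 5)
Your proposal is correct and follows essentially the same route as the paper, which simply cites ``a now standard argument from \cite[Lemma 2.12]{HM}'' (and \cite[Theorem 4.6]{M}) rather than writing it out: namely, the Galvin--Glazer idempotent construction with $A^\star$, adapted to the partial operation by intersecting with $\sigma(E_n)\in p$ (which, as you note, is in fact already forced by $x_{n+1}\in y^{-1}A^\star$), together with the observation that an idempotent of $\delta S$ applied to $A=\sigma(F)$ yields the first assertion.
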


\begin{proof} This follows by a now standard argument from \cite[Lemma 2.12]{HM}.
Two different proofs are given in \cite[Theorem 4.6]{M}. \end{proof}

The following theorem was proved in \cite[Theorem 4.1]{HP} for the case when 
$S$ is countable, without the assumption that ${\mathcal T}\neq\emp$. (So that
result establishes that for any countable $S$, ${\mathcal T}\neq\emp$.)

\begin{theorem}\label{adeqxtend} Let $(S,*)$ be an adequate partial semigroup, let
$n\in\ben$, and let $\langle f(t)\rangle_{t=1}^n$ be a sequence in $S$ with the
property that whenever $\emp\neq H\subseteq\nhat{n}$, $\prod_{t\in H}f(t)$
is defined. If ${\mathcal T}\neq\emp$, then $f$ can be extended to an
adequate sequence $\langle f(t)\rangle_{t=1}^\infty$.\end{theorem}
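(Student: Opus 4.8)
The plan is to graft a far-out tail of an already-existing adequate sequence onto the given finite sequence, shifted just enough so that every product mixing the two pieces is defined. Since ${\mathcal T}\neq\emp$, fix some $g=\langle g(t)\rangle_{t=1}^\infty\in{\mathcal T}$. Let $E=\{\prod_{t\in H}f(t):\emp\neq H\subseteq\nhat{n}\}$; this is a finite nonempty subset of $S$, and it is here that I use the hypothesis that each $\prod_{t\in H}f(t)$ with $\emp\neq H\subseteq\nhat{n}$ is defined. Applying property (ii) of the adequate sequence $g$ to $F=E$, pick $m\in\ben$ with $FP(\langle g(t)\rangle_{t=m}^\infty)\subseteq\sigma(E)$. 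Now define $f(n+j)=g(m+j-1)$ for every $j\in\ben$. This produces an infinite sequence $\langle f(t)\rangle_{t=1}^\infty$ extending the given one, and I claim it is adequate.

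For property (i), let $H\in\mathcal{P}_f(\ben)$ and write $H_1=H\cap\nhat{n}$, $H_2=H\setminus\nhat{n}$. If $H_2=\emp$ then $\prod_{t\in H}f(t)=\prod_{t\in H_1}f(t)$ is defined by hypothesis. If $H_1=\emp$, then under the order-preserving reindexing $t\mapsto t-n-1+m$ the set $H_2$ corresponds to some $K\in\mathcal{P}_f(\{m,m+1,\ldots\})$ and $\prod_{t\in H_2}f(t)=\prod_{s\in K}g(s)$, which is defined since $g$ satisfies property (i). If both $H_1,H_2$ are nonempty, then $\prod_{t\in H_1}f(t)\in E$ and $\prod_{t\in H_2}f(t)\in FP(\langle g(s)\rangle_{s=m}^\infty)\subseteq\sigma(E)$, so by the definition of $\sigma(E)$ the product $\big(\prod_{t\in H_1}f(t)\big)*\big(\prod_{t\in H_2}f(t)\big)$ is defined, and since $\max H_1<\min H_2$ extended associativity identifies it with $\prod_{t\in H}f(t)$. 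For property (ii), let $F\in\mathcal{P}_f(S)$; apply property (ii) for $g$ and enlarge the index if needed to get $\ell\geq m$ with $FP(\langle g(s)\rangle_{s=\ell}^\infty)\subseteq\sigma(F)$, then set $m'=n+1+(\ell-m)$, so that $\langle f(t)\rangle_{t=m'}^\infty$ is literally $\langle g(s)\rangle_{s=\ell}^\infty$ reindexed and hence $FP(\langle f(t)\rangle_{t=m'}^\infty)\subseteq\sigma(F)$.

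The only point requiring genuine care is the mixed case of property (i): one must check that membership of the tail-product in $\sigma(E)$ is exactly the condition needed to splice it onto a subproduct of the fixed initial block, and that the partial-semigroup associativity axiom propagates ``definedness'' correctly from the split product $\big(\prod_{t\in H_1}f(t)\big)*\big(\prod_{t\in H_2}f(t)\big)$ to the full product $\prod_{t\in H}f(t)$ taken in increasing order of indices. This is the standard extended-associativity bookkeeping for partial semigroups (a routine induction on $|H_2|$, peeling off one index at a time), and it is the step I would write out most carefully; the rest is a direct unwinding of the definition of adequate sequence and of $\sigma(\cdot)$.
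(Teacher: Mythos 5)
Your proposal is correct and follows essentially the same route as the paper's own proof: fix $g\in{\mathcal T}$, form the finite set of subproducts of the initial block, use property (ii) of $g$ to push the tail of $g$ into $\sigma$ of that set, splice, and then verify (i) by the three-case split on $H\cap\nhat{n}$ versus $H\setminus\nhat{n}$ and (ii) by reindexing a further tail of $g$. The only difference is a harmless off-by-one in where the grafted tail of $g$ begins, and your explicit attention to the extended-associativity step is a reasonable elaboration of what the paper treats as immediate.
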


\begin{proof} Assume that ${\mathcal T}\neq\emp$ and pick $g\in{\mathcal T}$.
Let $$\textstyle M=\big\{\prod_{t\in H}f(t):\emp\neq H\subseteq\nhat{n}\big\}\,.$$
Pick $m\in\ben$ such that $FP(\langle g(t)\rangle_{t=m}^\infty)\subseteq\sigma(M)$.
For $j\in\ben$, let $f(n+j)=g(m+j)$. To see that for all $H\in\mathcal{P}_f(\ben)$,
$\prod_{t\in H}f(t)$ is defined, let $H\in\mathcal{P}_f(\ben)$. If $\max H\leq n$, then
$\prod_{t\in H}f(t)$ is defined by assumption. If $\min H>n$, then
$\prod_{t\in H}f(t)=\prod_{s\in H+m-n}g(s)$. So assume that
$\min H\leq n<\max H$, let $L=H\cap\nhat{n}$, and let $G=H\setminus\nhat{n}$.
Then $\prod_{t\in L}f(t)\in M$ and $\prod_{t\in G}f(t)=\prod_{s\in G+m-n}g(s)\in\sigma(M)$
so $\prod_{t\in H}f(t)=(\prod_{t\in L}f(t))*(\prod_{t\in G}f(t))$ is defined.

Now let $F\in\mathcal{P}_f(S)$ be given and pick $r\in\ben$ such that 
$FP(\langle g(t)\rangle_{t=r}^\infty)\subseteq\sigma(F)$. We may presume that 
$r\geq m$. Then $$FP(\langle f(t)\rangle_{t=n+r-m}^\infty)=FP(\langle g(s)\rangle_{s=r}^\infty)\subseteq\sigma(F).$$
\end{proof}

\begin{theorem}\label{CRsimpler} Let $(S,*)$ be a commutative adequate partial semigroup,
let $A\subseteq S$, and let $k\in\ben$. The following statements are 
equivalent.
\begin{itemize}
\item[(1)] $(\forall L\in\mathcal{P}_f(S))(\exists r\in\ben)(\forall F\in\mathcal{P}_f(\mathcal{T})_{\leq k})
(\exists b\in S)(\exists H\in\mathcal{P}_f(\nhat{r})) \\
(\forall f\in F)(b*\prod_{t\in H}f(t)\in A\cap\sigma(L))$.
\item[(2)] $(\forall L\in\mathcal{P}_f(S))(\exists r\in\ben)\\
(\forall F\in\mathcal{P}_f(\mathcal{T})_{\leq k})(\exists m\in\ben)
(\exists a\in S^{m+1})(\exists t(1)<\ldots <t(m)\leq r\hbox{ in }\ben)\\
(\forall f\in F)
(a(1)*f(t(1))*a(2)*\ldots* a(m)*f(t(m))*a(m+1)\in A\cap\sigma(L))$.
\end{itemize}\end{theorem}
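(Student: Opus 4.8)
The plan is to establish the two implications separately, in each case obtaining the same witnessing $r$ for a given $L$. I shall use freely the elementary fact that in a commutative partial semigroup a finite product which is defined for one ordering of its factors is defined, and has the same value, for every ordering; in particular, every sub-product of a defined product is defined. If $\mathcal{T}=\emp$ both statements are vacuous, so assume $\mathcal{T}\neq\emp$ and fix $h\in\mathcal{T}$.

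\emph{(2)$\Rightarrow$(1).} Given $L$, let $r$ be as in (2). Given $F$, choose $m$, $a\in S^{m+1}$ and $t(1)<\cdots<t(m)\le r$ as in (2), and set $b=a(1)*a(2)*\cdots*a(m+1)$. Reordering the product $a(1)*f(t(1))*a(2)*\cdots*a(m)*f(t(m))*a(m+1)\in A\cap\sigma(L)$ so that $a(1),\dots,a(m+1)$ come first shows that $b\in S$ and that $b*\prod_{t\in\{t(1),\dots,t(m)\}}f(t)\in A\cap\sigma(L)$ for each $f\in F$. Since $\{t(1),\dots,t(m)\}\in\mathcal{P}_f(\nhat{r})$, this is (1) with the same $r$.

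\emph{(1)$\Rightarrow$(2).} This is the substance of the theorem. The obstruction is that the constants $a(2),\dots,a(m+1)$ occurring in (2) must multiply, together with $a(1)$, to the single element that (1) produces, while an arbitrary element of a partial semigroup need not factor in this way. I would get around this by applying (1) not to $F$ but to a modified family whose finite products openly carry along fixed elements usable as those constants, with the original sequences sampled only at small indices. In detail: given $L$, let $r$ be as in (1); given $F\in\mathcal{P}_f(\mathcal{T})_{\le k}$, set $F''=\{\prod_{t\in I}f(t):f\in F,\ \emp\ne I\subseteq\nhat{r}\}\in\mathcal{P}_f(S)$ and, using that $h$ is adequate, pick $N\in\ben$ with $FP(\langle h(t)\rangle_{t=N}^\infty)\subseteq\sigma(F'')$. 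For $f\in F$ define $g_f:\ben\to S$ by $g_f(n)=h(N+n)*f(n)$ for $n\le r$ and $g_f(n)=h(N+n)$ for $n>r$. Each term is defined (since $h(N+n)\in\sigma(F'')$ and, when $n\le r$, $f(n)\in F''$); for any $H\in\mathcal{P}_f(\ben)$ the product $\prod_{t\in H}g_f(t)$ reorders to $\bigl(\prod_{t\in H}h(N+t)\bigr)*\bigl(\prod_{t\in H\cap\nhat{r}}f(t)\bigr)$ (the second factor absent when $H\cap\nhat{r}=\emp$), which is defined because $\prod_{t\in H}h(N+t)\in FP(\langle h(t)\rangle_{t=N}^\infty)\subseteq\sigma(F'')$ and, when $H\cap\nhat{r}\ne\emp$, $\prod_{t\in H\cap\nhat{r}}f(t)\in F''$; and property (ii) of adequacy passes from $h$ to $g_f$ since $g_f(n)=h(N+n)$ for $n>r$, so tails of $g_f$ are tails of $h$. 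Hence $g_f\in\mathcal{T}$ and $G=\{g_f:f\in F\}\in\mathcal{P}_f(\mathcal{T})_{\le k}$. Applying (1) to $L$ and $G$ produces $b\in S$ and $H=\{s(1)<\cdots<s(p)\}\in\mathcal{P}_f(\nhat{r})$ with $b*\prod_{t\in H}g_f(t)\in A\cap\sigma(L)$ for all $f\in F$. Since $H\subseteq\nhat{r}$, the element $b*\prod_{t\in H}g_f(t)$ is a reordering of $b*\bigl(\prod_{i=1}^p h(N+s(i))\bigr)*\bigl(\prod_{i=1}^p f(s(i))\bigr)$. Setting $m=p$, $t(i)=s(i)$, $a(1)=b$ and $a(i+1)=h(N+s(i))$ for $1\le i\le p$, the element $a(1)*f(t(1))*a(2)*\cdots*a(m)*f(t(m))*a(m+1)$ is another reordering of that same product, hence lies in $A\cap\sigma(L)$, while $t(1)<\cdots<t(m)=s(p)\le r$. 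This gives (2), once more with the same $r$.

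The step I expect to require the most care is checking that the $g_f$ are genuine adequate sequences while still keeping the sampling indices below a bound independent of $F$: the elements inserted on the left must be compatible with all the relevant products of values of members of $F$, which is precisely what the choice of $N$ (via property (ii) of $h$) arranges, and this compatibility must not be bought by shifting the sequences $f$, which would push the indices $t(i)$ past $r$. Shifting $h$ instead costs nothing, since the $h$-values appear only inside the constants $a(2),\dots,a(m+1)$, which carry no index restriction, so the bound $r$ is inherited unchanged from (1) applied to $G$.
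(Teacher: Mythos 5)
Your proof is correct and follows essentially the same strategy as the paper's: for (1)$\Rightarrow$(2) you replace each $f\in F$ by an adequate sequence $g_f$ whose first $r$ terms are the terms of $f$ multiplied by fixed auxiliary elements chosen to be compatible with all products $\prod_{t\in I}f(t)$, $\emp\neq I\subseteq\nhat{r}$, then apply (1) to $\{g_f:f\in F\}$ and unpack, keeping the same bound $r$. The only cosmetic differences are that you take the auxiliary elements from a far-enough tail of a fixed $h\in{\mathcal T}$ and define $g_f$ on all of $\ben$ at once (and multiply on the left, so you need one extra commutativity reordering at the end), whereas the paper obtains them from Lemma \ref{fpsigma} and extends the finitely defined $g_f$ via Theorem \ref{adeqxtend}.
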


\begin{proof} Note that if ${\mathcal T}=\emp$, both statements are vacuously true.

(1) implies (2). Let $L\in\mathcal{P}_f(S)$  and assume we have $r\in\ben$ such that
whenever $G\in\mathcal{P}_f({\mathcal T})_{\leq k}$, there exist $b\in S$
and $H\in\mathcal{P}_f(\nhat{r})$ such that for all $f\in G$, $b\cdot\prod_{t\in H}f(t)\in A\cap\sigma(L)$.

Let $F\in\mathcal{P}_f(\mathcal{T})_{\leq k}$ be given. 
Let $$\textstyle M=\{\prod_{t\in H}f(t):f\in F\hbox{ and }\emp\neq H\subseteq\nhat{r}\}\,.$$
By Lemma \ref{fpsigma} pick $\langle h(j)\rangle_{j=1}^r$ 
such that $FP(\langle h(j)\rangle_{j=1}^r)\subseteq \sigma(M)$.
For $f\in F$ define $g_f:\nhat{r}\to S$ by for $j\in\nhat{r}$, $g_f(j)=f(j)*h(j)$.

Given $\emp\neq H\subseteq\nhat{r}$, $\prod_{t\in H}f(t)\in M$ so
$\prod_{t\in H}f(t)*\prod_{t\in H}h(t)$ is defined so 
$\prod_{t\in H}g_f(t)=\prod_{t\in H}(f(t)*h(t))$ is defined. By Theorem \ref{adeqxtend}
each $g_f$ can be extended to an adequate sequence which we also denote by 
$g_f$.

Let $G=\{g_f:f\in F\}$. Then $G\in\mathcal{P}_f({\mathcal T})_{\leq k}$ so pick $b\in S$
and $H\in\mathcal{P}_f(\nhat{r})$ such that for all $f\in F$, $b*\prod_{t\in H}g_f(t)\in A\cap\sigma(L)$.
Let $m=|H|$, let $\langle t(1),t(2),\ldots,t(m)\rangle$ enumerate $H$ in increasing order,
let $a(1)=b$, and for $j\in\{2,3,\ldots,m+1\}$ let $a(j)=h(t(j-1))$.
Then for each $f\in F$, 
$a(1)*f(t(1))*a(2)*\ldots* a(m)*f(t(m))*a(m+1)=
b*\prod_{t\in H}g_f(t)\in A\cap\sigma(L)$. 

(2) implies (1). Let $L\in\mathcal{P}_f(S)$. Pick $r\in\ben$ such that
$(\forall F\in\mathcal{P}_f(\mathcal{T})_{\leq k})\\
(\exists m\in\ben)
(\exists a\in S^{m+1})(\exists t(1)<\ldots <t(m)\leq r\hbox{ in }\ben)\\
(\forall f\in F)
(a(1)*f(t(1))*a(2)*\ldots* a(m)*f(t(m))*a(m+1)\in A\cap\sigma(L))$.

Let $b=\prod_{j=1}^{m+1}a(j)$ and let $H=\{t(1),t(2),\ldots,t(m)\}$.
For each $f\in F$,  $b*\prod_{t\in H}f(t)=
a(1)*f(t(1))*a(2)*\ldots* a(m)*f(t(m))*a(m+1)\in A\cap\sigma(L)$.
\end{proof}

The following similar result for J sets in countable adequate
partial semigroups was proved in \cite{HP}.

\begin{theorem}\label{Jsimpler} Let $(S,*)$ be a countable commutative adequate partial semigroup,
and let $A\subseteq S$. The following statements are 
equivalent.
\begin{itemize}
\item[(1)] $(\forall L\in\mathcal{P}_f(S))(\forall F\in\mathcal{P}_f(\mathcal{T}))
(\exists b\in S)(\exists H\in\mathcal{P}_f(\ben))\\
(\forall f\in F)(b*\prod_{t\in H}f(t)\in A\cap\sigma(L))$.
\item[(2)] $(\forall L\in\mathcal{P}_f(S))
(\forall F\in\mathcal{P}_f(\mathcal{T}))(\exists m\in\ben)
(\exists a\in S^{m+1})\\
(\exists t(1)<\ldots <t(m)\hbox{ in }\ben)
(\forall f\in F)\\
(a(1)*f(t(1))*a(2)*\ldots* a(m)*f(t(m))*a(m+1)\in A\cap\sigma(L))$.
\end{itemize}\end{theorem}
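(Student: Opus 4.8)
The plan is to follow the proof of Theorem~\ref{CRsimpler} almost verbatim; the one genuine difference is that the simplified condition in~(1) puts no bound $r$ on the finite set $H$. As there, if $\mathcal{T}=\emp$ both statements are vacuous, and since $S$ is countable we otherwise have $\mathcal{T}\neq\emp$ (cf.\ \cite[Theorem 4.1]{HP}), so I assume $\mathcal{T}\neq\emp$. That (2) implies~(1) is routine: given $L\in\mathcal{P}_f(S)$ and $F\in\mathcal{P}_f(\mathcal{T})$, apply~(2) to obtain $m\in\ben$, $a\in S^{m+1}$, and $t(1)<\ldots<t(m)$ in $\ben$ with $a(1)*f(t(1))*a(2)*\ldots*a(m)*f(t(m))*a(m+1)\in A\cap\sigma(L)$ for each $f\in F$; put $b=\prod_{j=1}^{m+1}a(j)$ and $H=\{t(1),\ldots,t(m)\}$, and use commutativity of $(S,*)$ to reassociate and reorder this defined product so that $b*\prod_{t\in H}f(t)=a(1)*f(t(1))*\ldots*a(m+1)\in A\cap\sigma(L)$ for every $f\in F$. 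This is word for word the ``(2) implies~(1)'' step of Theorem~\ref{CRsimpler}.

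For ``(1) implies~(2)'', fix $L\in\mathcal{P}_f(S)$ and $F\in\mathcal{P}_f(\mathcal{T})$. The key step is to produce one sequence $\langle h(t)\rangle_{t=1}^\infty$ in $S$ such that, for every $f\in F$, the sequence $g_f$ given by $g_f(t)=f(t)*h(t)$ is an adequate sequence; by commutativity each finite product then equals $\prod_{t\in H}g_f(t)=(\prod_{t\in H}f(t))*(\prod_{t\in H}h(t))$. Granting this, let $G=\{g_f:f\in F\}\in\mathcal{P}_f(\mathcal{T})$ and apply~(1) to $G$ and $L$ to get $b\in S$ and $H=\{t(1),\ldots,t(m)\}_<$ with $b*\prod_{t\in H}g_f(t)\in A\cap\sigma(L)$ for all $f\in F$; set $a(1)=b$ and $a(j)=h(t(j-1))$ for $2\le j\le m+1$, and reorder as in Theorem~\ref{CRsimpler} to conclude $a(1)*f(t(1))*a(2)*\ldots*a(m)*f(t(m))*a(m+1)=b*\prod_{t\in H}g_f(t)\in A\cap\sigma(L)$ for every $f\in F$, which is~(2). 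I would build $\langle h(t)\rangle$ recursively: having chosen $h(1),\ldots,h(n-1)$ — so that all partial products $\prod_{t\in K}g_f(t)$ with $\emp\neq K\subseteq\nhat{n-1}$, $f\in F$, are determined — choose $h(n)$ in $\sigma(E)$ for a suitable finite $E\subseteq S$ (so $\sigma(E)\neq\emp$, by adequacy of $S$), picked so as at once to keep all products $\prod_{t\in K}g_f(t)$ with $K\subseteq\nhat{n}$ defined and, bookkeeping against a fixed enumeration $\langle s_i\rangle_{i=1}^\infty$ of $S$, to drive the tails of each $g_f$ into $\vvarphi(s_i)$ for $i\le n$; this secures both clauses in the definition of an adequate sequence. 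This is in essence the plentifulness construction of \cite[Theorem 4.1]{HP}; alternatively one can take the $h(t)$ from an idempotent of $\delta S$ via Lemma~\ref{fpsigma} and check the needed definedness directly.

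The main obstacle is exactly this construction of the $g_f$ as adequate sequences, and it is the only place where countability of $S$ is used. In Theorem~\ref{CRsimpler} the set $H$ handed back by the simplified condition lies inside a $\nhat{r}$ fixed beforehand, so one needs only the finitely many perturbing elements $h(1),\ldots,h(r)$ — furnished by Lemma~\ref{fpsigma} applied to the finite set of partial products over nonempty subsets of $\nhat{r}$ — after which Theorem~\ref{adeqxtend} extends the finite perturbed sequences to adequate sequences at no cost. Here $H$ is not bounded in advance, so one needs $g_f(t)=f(t)*h(t)$ for all $t$, the family of partial products that $\langle h(t)\rangle$ must ``dominate'' is infinite, and Lemma~\ref{fpsigma} with Theorem~\ref{adeqxtend} no longer suffices; one genuinely has to construct the $g_f$ as full adequate sequences, which is possible precisely because $S$ is countable. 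Everything else is the same bookkeeping as in Theorem~\ref{CRsimpler}, with ``$H\subseteq\nhat{r}$'' replaced throughout by ``$H$ finite''.
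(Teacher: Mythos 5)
Your ``(2) implies (1)'' direction is fine, and you have correctly located the crux of ``(1) implies (2)'': since $H$ is no longer confined to $\nhat{r}$, one cannot get by with finitely many perturbing elements $h(1),\ldots,h(r)$ from Lemma \ref{fpsigma}. But the recursive construction you propose to replace it does not work as stated, and this is a genuine gap. The problem is that definedness of $\prod_{t\in K}g_f(t)$ for $K\ni n$ is not something you can secure by choosing $h(n)$ at stage $n$: writing $K=K'\cup\{n\}$, the product $(\prod_{t\in K'}g_f(t))*f(n)*h(n)$ requires its initial segment $(\prod_{t\in K'}g_f(t))*f(n)$ to be defined, and that is a constraint on $h(1),\ldots,h(n-1)$ involving the \emph{future} value $f(n)$. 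To guard against all future $n$ at stage $n-1$ you would need $h$-products to lie in an intersection of infinitely many sets $\vvarphi(\cdot)$, which adequacy does not provide. Concretely, in the commutative adequate partial semigroup $(\mathcal{P}_f(\ben),\ustar)$ take $F=\{f\}$ with $f(t)=\{t\}$ (an adequate sequence): for $g_f(1)\ustar g_f(n)$ to be defined one needs $h(1)\cap f(n)=\emp$ for every $n\ge 2$, i.e.\ $h(1)\subseteq\{1\}$, while $g_f(1)=f(1)\ustar h(1)$ forces $1\notin h(1)$; so no admissible $h(1)$ exists and the recursion is already stuck at the first step, no matter how cleverly $E$ is chosen later. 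The repair requires an additional idea your sketch omits: replace $g_f(n)=f(n)*h(n)$ by $g_f(n)=f(\nu(n))*h(n)$ for a sufficiently fast-growing $\nu$, chosen at each stage using clause (ii) of adequacy of the $f$'s so that the tails $FP(\langle f(t)\rangle_{t=\nu(n)}^\infty)$ lie in $\sigma$ of the finitely many products already built. This reindexing is harmless here precisely because statement (2) places no upper bound on $t(1)<\ldots<t(m)$ (it would destroy the CR version), but it must be carried out, and doing so is essentially the content of \cite[Theorem 4.4]{HP}.

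For comparison: the paper does not reprove this implication at all --- its proof of Theorem \ref{Jsimpler} consists of observing that (2) implies (1) trivially and citing \cite[Theorem 4.4]{HP} for (1) implies (2). So your proposal is more ambitious than the paper's proof, but as written it stops exactly where the real work (and the use of countability) begins.
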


\begin{proof} It is trivial that (2) implies (1). That (1) implies (2) is \cite[Theorem 4.4]{HP}.
\end{proof}

We are unable to remove the countability assumption 
from Theorem \ref{Jsimpler}.

\vspace{0.3cm}

\noindent   \textbf{Acknowledgement:} The first named author would like to thank the Department of Mathematics, University of Haifa for her position.

\end{document}